\documentclass[11pt,reqno]{amsart}
\usepackage{amssymb,mathtools,calc,verbatim,enumitem,tikz,url,hyperref,mathrsfs,cite,fullpage}
\usepackage{bbm}
\usepackage{stmaryrd}
\usepackage{textcomp}
\usepackage{setspace}
\usepackage{amssymb}
\usepackage{amsthm}
\usepackage{amsmath}
\usepackage{graphicx}
\usepackage{marvosym}
\usepackage{empheq}
\usepackage{latexsym}
\usepackage{fontenc}
\usepackage{color}
\usepackage{hyperref}
\usepackage{cleveref}
\usepackage{dsfont}

\usepackage{todonotes}

\newenvironment{poc}{\begin{proof}[Proof of claim]}{\end{proof}}

\newtheorem{theorem}{Theorem}[section]
\newtheorem{lemma}[theorem]{Lemma}

\newtheorem{conjecture}[theorem]{Conjecture}

\newtheorem{proposition}[theorem]{Proposition}
\newtheorem{claim}[theorem]{Claim}
\newtheorem*{claim*}{Claim}

\theoremstyle{definition}

\newtheorem*{qu*}{Question}
\theoremstyle{remark}
\newtheorem{remark}[theorem]{Remark}

\newcommand\cM{\mathcal{M}}

\renewcommand\leq{\leqslant}
\renewcommand\geq{\geqslant}
\renewcommand\le{\leqslant}
\renewcommand\ge{\geqslant}

	\def\<{\langle }
	\def\>{\rangle }

\begin{document}

\title{Colour diversity in spanning structures under Dirac-type conditions}
\author{
Xinbu Cheng 
\and 
Xinqi~Huang
\and  
Hong Liu 
\and 
Bin Wang 
\and 
Zhifei Yan}

\address{IMPA, Estrada Dona Castorina 110, Jardim Bot\^anico,
Rio de Janeiro, 22460-320, Brazil}\email{xinbu.cheng@impa.br}

\address{University of Science and Technology of China,
Hefei, 230026, China and ECOPRO, Institute for Basic Science, 55 Expo-ro, Yuseong-gu, Daejeon, 34126, Korea}\email{huangxq@mail.ustc.edu.cn}

\address{ECOPRO, Institute for Basic Science, 55 Expo-ro, Yuseong-gu, Daejeon, 34126, Korea}\email{\{hongliu,zhifeiyan\}@ibs.re.kr}

\address{School of Mathematics and Statistics, Beijing Institute of Technology, China}
\email{bin.wang@bit.edu.cn}

\thanks{Xinqi Huang, Hong Liu and Zhifei Yan were supported by the Institute for Basic Science (IBS-R029-C4)}
	
\begin{abstract}
Finding spanning structures with many distinct colours in properly edge-coloured graphs is a central theme in extremal combinatorics. A classical result of Andersen shows that every proper edge-colouring of the complete graph $K_n$ contains a Hamilton cycle with $n - O(n^{1/2})$ distinct colours. In the bipartite setting, the analogous question for perfect matchings is closely related to permutations in Latin squares. In this paper, we investigate how a Dirac-type minimum degree condition forces colour diversity in spanning structures. For every constant $1/2 < c \le 1$, we prove the following.
\begin{itemize}
    \item Every properly edge-coloured graph $G$ on $n$ vertices with $\delta(G)\ge cn$ contains a Hamilton cycle with at least $cn - O(n^{1/2})$ distinct colours.

    \item Every subset of an $n\times n$ Latin square with at least $cn$ cells in each row and each column contains a permutation with at least $cn - O(n^{2/3})$ distinct symbols.
\end{itemize}
Both bounds are best possible up to the error term.
\end{abstract}

	\maketitle

\section{Introduction}
Given a properly edge-coloured graph, a natural and long-studied problem is to find a spanning structure whose edges all receive different colours, commonly referred to as a \emph{rainbow} structure. More generally, when a fully rainbow structure does not exist, one seeks spanning structures that maximise the number of distinct colours they contain. In this paper we investigate how minimum degree conditions force colour diversity in spanning structures. This line of research has been particularly active for Hamilton cycles in graphs and perfect matchings in bipartite graphs, two central spanning structures in extremal combinatorics.

\subsection{Hamilton cycles in graphs}

In 1980, Hahn~\cite{Hahn} conjectured that every properly edge-coloured
$K_n$ with $n\ge5$ contains a rainbow Hamilton path.
This conjecture was disproved by Maamoun and Meyniel~\cite{MM}, who
constructed infinitely many counterexamples.
This shifted attention to the weaker but more robust question of how many
distinct colours can be guaranteed on a Hamilton cycle. A foundational result in this direction is due to Andersen~\cite{A}, who proved that properly edge-coloured complete graphs contain
almost rainbow Hamilton cycles.

\begin{theorem}[Andersen]
For all sufficiently large $n$, every properly edge-coloured $K_n$
contains a Hamilton cycle with at least $n-\sqrt{2n}$ distinct colours.
\end{theorem}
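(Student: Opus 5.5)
The plan is a rotation--extension (Pósa-type) argument run on a single extremal Hamilton cycle. Among all Hamilton cycles of $K_n$, fix one, $C = v_1 v_2 \cdots v_n v_1$, that has the maximum number of distinct colours. Suppose, for contradiction, that $C$ has fewer than $n-\sqrt{2n}$ colours. Then more than $\sqrt{2n}$ of its edges are \emph{repeated}, meaning each has a colour already used by another edge of $C$.

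The basic move is the $2$-opt exchange. For indices $i<j$, delete the edges $v_iv_{i+1}$ and $v_jv_{j+1}$ and add $v_iv_j$ and $v_{i+1}v_{j+1}$; this gives another Hamilton cycle. Suppose $v_iv_{i+1}$ is a repeated edge, and the two new edges receive distinct colours that are absent from $C$. Then the colour count goes up, since losing a repeated edge costs nothing. It could only go down through $v_jv_{j+1}$, and this is offset by the two fresh colours. This contradicts maximality. So the proof reduces to counting the pairs $(i,j)$ for which the exchange fails.

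Fix a repeated edge $e=v_iv_{i+1}$ and let $j$ range over the other indices. The exchange fails only if one of the following holds:
\begin{enumerate}
\item $v_iv_j$ has a colour already on $C$;
\item $v_{i+1}v_{j+1}$ has a colour already on $C$;
\item $v_iv_j$ and $v_{i+1}v_{j+1}$ have the same colour.
\end{enumerate}
Because the colouring is proper, each colour appears at most once at $v_i$, so case 1 happens for at most $|\mathrm{col}(C)|$ values of $j$. The same bound holds for case 2 at $v_{i+1}$. This naive bound is too weak, since $|\mathrm{col}(C)|$ is close to $n$.

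The key refinement is to count only the \emph{excess}. Consider the edges $v_iv_j$ as $j$ varies. Their colours are pairwise distinct, and they cover $n-1$ colours in total. The colours used on $C$ can be hit at most $|\mathrm{col}(C)| \le n-1-k$ times, where $k$ is the number of missing colours. This leaves at least about $k$ values of $j$ for which $v_iv_j$ has a new colour.

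I therefore expect the argument to need two ingredients:
\begin{itemize}
\item a sharper choice of $C$, for example maximising the colour count and then, secondarily, some potential function;
\item a double count over all pairs made of a repeated edge and an index $j$.
\end{itemize}
The double count should show that the number of failures is at most roughly $r^2/2$, where $r$ is the number of repeated edges. This is what produces the threshold $\sqrt{2n}$: when $r>\sqrt{2n}$, the $r$ repeated edges have too many good partners $j$ for all exchanges to fail. The failures from case 3 are controlled because, for fixed $i$, each colour class is a matching, so case 3 gives at most $O(1)$ coincidences per $j$.

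The main obstacle is exactly this counting step. One has to show that the indices $j$ giving fresh colours at $v_i$ and at $v_{i+1}$ overlap, after the index shift $j\mapsto j+1$, in many positions simultaneously. Counting each endpoint separately does not give this. My first attempt would be to use, for every repeated edge, the at least $n-|\mathrm{col}(C)|-1$ fresh colours at each of its endpoints, and then sum over all repeated edges. If the exchanges always fail, every fresh edge at $v_i$ must be paired with a used-colour edge at $v_{i+1}$. The used colours number at most $n-r$, and each is used by a repeated edge at most $r$ times in total. Comparing these two counts should force $r(r-1)/2 < n$, that is, $r<\sqrt{2n}$ up to lower-order terms, which is the desired contradiction.
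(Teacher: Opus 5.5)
\textbf{There is a genuine gap, and it sits exactly in the counting step.} Your reduction is fine: removing a repeated edge and one other edge costs at most one colour, so two distinct fresh colours give a net gain. Write $r:=n-|\mathrm{col}(C)|$. For a fixed repeated edge $v_iv_{i+1}$ you only know two things, each among roughly $n$ indices $j$:
\begin{itemize}
\item about $r$ indices make $\chi(v_iv_j)$ fresh;
\item about $r$ indices make $\chi(v_{i+1}v_{j+1})$ fresh.
\end{itemize}
When $r=o(n)$ there is no reason for these two sets to meet.

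Your closing double count does not change this. Even if case~3 never occurred, you have:
\begin{itemize}
\item at least $r-1$ indices $j$ with $v_iv_j$ fresh;
\item for each such $j$, a failing exchange needs $v_{i+1}v_{j+1}$ to carry one of the at most $n-r$ used colours;
\item each used colour appears at most once at $v_{i+1}$.
\end{itemize}
So the comparison yields only $r-1\le n-r$, i.e.\ $r\le (n+1)/2$. That is a Hamilton cycle with about $n/2$ colours, not $r(r-1)/2<n$. Summing over repeated edges multiplies both sides by the same factor and gains nothing.

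Two further points are unsupported. First, the claim that case~3 contributes $O(1)$ has no basis. For fixed $i$ the colours $\chi(v_iv_j)$ are pairwise distinct, so properness puts no limit on how many $j$ satisfy $\chi(v_iv_j)=\chi(v_{i+1}v_{j+1})$. Second, the ``secondary potential function'' is never specified, so it cannot fill the hole.

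\textbf{The missing idea is to chain exchanges rather than analyse one at a time.} This is the backtracking mechanism of Andersen that the paper adapts in Lemma~\ref{lem:keyobs} and Proposition~\ref{prop:pathforest}. The paper itself only quotes Andersen's theorem and does not reprove it. The mechanism works as follows.
\begin{enumerate}
\item In a colour-maximal structure, every vertex that an endpoint reaches through a currently usable colour must have a globally uniquely coloured predecessor edge.
\item Otherwise there is a chain of swaps that gains a colour. In this chain each added edge reuses a colour deleted elsewhere in the chain, and the last added edge brings in a colour from $C_0$.
\item The colours of these uniquely coloured edges then become usable at the next stage.
\item Hence the usable colour set grows at every stage by roughly $r$ minus a loss from vertices already committed, not just once.
\end{enumerate}
Summing over stages gives a quadratic quantity, namely $t(\delta n-t)$ in Proposition~\ref{prop:pathforest}. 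This quantity must stay below the number of colours on the structure, which is at most $n$, and that comparison is what produces a $\sqrt n$-type bound. To get the constant $\sqrt 2$, you need the loss at the $i$-th stage to be only about $i$, so that the sum is about $r^2/2$. A one-shot $2$-opt count only ever sees the roughly $r$ fresh colours at two vertices, so it has no such growth.
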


Balogh and Molla~\cite{BM} later improved the error term, proving that every properly edge-coloured $K_n$ contains a Hamilton cycle with at least $n - O(\log^2 n)$ distinct colours.

More recently, rainbow Hamiltonicity has been investigated in general
graphs.
Coulson and Perarnau~\cite{CP} established a rainbow Dirac theorem under
a global boundedness condition on colour classes.
Peng and the fifth author~\cite{PY} subsequently studied proper
edge-colourings and showed that if each colour appears on at most $n/8$
edges, then every Dirac graph contains a Hamilton cycle with $n-o(n)$
distinct colours.
Without any boundedness assumption, they obtained a universal bound of
$n/4-o(n)$ colours.

These results highlight a difficulty: unlike the complete
graph, a properly edge-coloured Dirac graph with $\delta(G)\ge cn$ may use only $cn+O(1)$ colours. In particular, even the total number of colours available may be far below $n$, so a rainbow Hamilton cycle cannot in general be expected.
As a result, techniques that rely on an abundance of colours in $K_n$ do
not directly extend to the min-degree setting. Motivated by this difficulty, the following conjecture was proposed in~\cite{PY}.
\begin{conjecture}[\cite{PY}]\label{conj:PY}
   Every proper edge-colouring of an $n$-vertex graph with $\delta(G)\ge n/2$ contains a Hamilton cycle with at least $n/2-o(n)$ distinct colours.   
\end{conjecture}

We verify this conjecture for all graphs with min-degree $cn$ with $c>1/2$.

\begin{theorem}\label{thm:main}
Let $1/2<c\le1$ and let $n$ be sufficiently large.
If $G$ is an $n$-vertex graph with $\delta(G)\ge cn$, then every properly
edge-coloured $G$ contains a Hamilton cycle with at least
$cn-O(\sqrt n)$ distinct colours.
\end{theorem}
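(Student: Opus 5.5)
Our plan is to adapt Andersen's rotation--extension argument to the minimum-degree setting, using $c>1/2$ to keep a linear surplus of degree. Write $\varepsilon = c - 1/2 > 0$. Among all Hamilton cycles of $G$ (one exists by Dirac's theorem), pick $H$ maximising the number of distinct colours, and suppose it has fewer than $cn - C\sqrt n$ colours, with $C$ a large constant depending on $\varepsilon$. Call an edge of $H$ \emph{redundant} if its colour appears on another edge of $H$. Since $H$ has $n$ edges but fewer than $cn - C\sqrt n$ colours, there are at least $(1-c)n + C\sqrt n$ redundant edges; equivalently, after deleting one edge from each repeated colour class we still have a set $R$ of at least $n - |\text{colours}(H)| \ge (1-c)n + C\sqrt n$ edges whose removal loses no colour.

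The key step is a local switch. Take a redundant edge $e = v_iv_{i+1}$ of $H$. Every vertex has at least $cn$ neighbours, hence at least $cn$ incident colours. The colours already on $H$ number fewer than $cn - C\sqrt n$, so $v_i$ has at least $C\sqrt n$ neighbours $v_j$ with $c(v_iv_j)$ not on $H$, and similarly for $v_{i+1}$. A $2$-opt move replaces $v_iv_{i+1}, v_jv_{j+1}$ by $v_iv_j, v_{i+1}v_{j+1}$. This gains the new colour $c(v_iv_j)$ and loses at most the colour of $v_jv_{j+1}$, unless that edge is also redundant. By maximality of $H$, for every new-colour neighbour $v_j$ of $v_i$ with $v_jv_{j+1}$ redundant, the edge $v_{i+1}v_{j+1}$ must either be absent or carry a colour already ``used up''.

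The counting would go as follows. Sum over pairs of redundant edges $(v_iv_{i+1}, v_jv_{j+1})$ and count those for which the switch is available, meaning that both $v_iv_j$ and $v_{i+1}v_{j+1}$ are edges and at least one of them carries a fresh colour. Because $\delta(G) \ge (1/2+\varepsilon)n$, for a fixed $i$ the sets $\{j : v_iv_j \in E\}$ and $\{j : v_{i+1}v_{j+1} \in E\}$ intersect in at least $2\varepsilon n$ indices. Properness of the colouring bounds the number of ``bad'' coincidences: each colour appears at most once at $v_i$, so at most $n$ indices are blocked per colour, and colours repeated on $H$ contribute few extra edges. The error term $\sqrt n$ enters exactly as in Andersen: a pair whose switch fails forces a colour collision, and there are only $O(n)$ colour classes each meeting any vertex once. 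So if the deficit exceeds $C\sqrt n$, the $\Omega(n\cdot C\sqrt n)$ fresh-colour edges at redundant endpoints cannot all be blocked by the $O(n)\cdot O(\sqrt n)$ possible collisions. A successful switch, possibly combined with a second $2$-opt or a rotation to fix the cycle orientation, yields a Hamilton cycle with more colours, contradicting maximality.

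The main obstacle is this last step. The $2$-opt requires the edge $v_{i+1}v_{j+1}$ to exist, which in $K_n$ is automatic but here holds only for a $2\varepsilon$-fraction. We also must avoid the switch destroying the colour of $v_{i+1}v_{j+1}$'s neighbourhood structure. We would first try restricting attention to indices $j$ in the intersection of the two neighbourhoods above, so that both new edges exist. We would then rerun Andersen's double count on that set, with constants depending on $\varepsilon$. If that fails, we would fall back on using absorbing-style rotations (P\'osa) to move a fresh-colour chord into position.
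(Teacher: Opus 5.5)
Your proposal has a genuine gap at exactly the step you flag as the main obstacle, and the counting you sketch cannot close it.

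Working directly with a colour-maximal Hamilton cycle, every improving move must create at least two new edges at once. For your $2$-opt these are $v_iv_j$ and $v_{i+1}v_{j+1}$.

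\begin{itemize}
\item When the deficit is just above $C\sqrt n$, the set of $j$ with $\chi(v_iv_j)$ unused on $H$ may have size only about $C\sqrt n$. Meanwhile $v_{i+1}$ may have up to $(1-c)n$ non-neighbours, and nothing in your argument prevents every corresponding $v_{j+1}$ from being one of them.
\item The fact that $N(v_i)$ and the shifted $N(v_{i+1})$ share $2\varepsilon n$ indices concerns all neighbours, not fresh-colour ones. Restricting to that intersection may leave no candidate at all.
\item For the same reason the double count is misdirected. In the Dirac setting a switch is blocked by a missing edge, not by a colour collision. There can be $\Theta(n^2)$ non-edges, far more than the $\Theta(Cn^{3/2})$ fresh-colour edges you must rule out. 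So no comparison with ``$O(n)\cdot O(\sqrt n)$ collisions'' is available, and the $\sqrt n$ error term is never actually derived.
\item Incidentally, maximality forces $v_{i+1}v_{j+1}\notin E(G)$ outright when $v_jv_{j+1}$ is redundant with a colour twin other than $v_iv_{i+1}$. The ``used-up colour'' alternative does not arise, since adding an edge never loses a colour.
\item The P\'osa fallback is unspecified, and a cycle has no endpoint to rotate.
\end{itemize}

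The missing idea, which is how the paper proceeds, is to relax the cycle to a spanning linear forest with $t\approx\sqrt n$ paths.

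\begin{itemize}
\item In such a forest the basic move needs only one new edge: add an edge from a path start $v_1^{(i)}$ to a vertex $x$, and delete the edge from $x$ to its predecessor. Edges of unused colour at $v_1^{(i)}$ exist by properness and $\delta(G)\ge cn$.
\item Colour-maximality among forests with exactly $t$ paths then gives a rigidity statement (Lemma~\ref{lem:keyobs}). Every vertex reached from a start vertex by an edge whose colour is unused, or was reached at an earlier stage, has a predecessor edge whose colour appears nowhere else in the forest. Otherwise a chain of such moves gains a colour.
\item Hence the reached colour sets grow by at least $\delta n-t$ per start vertex and exceed $(c-\delta)n$ after $t$ rounds. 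Balancing $t\delta>2c$ against $t<\delta n/2$ is what produces the $\sqrt n$ error.
\item Only afterwards is the forest closed into a Hamilton cycle, using an absorbing path and a reservoir set aside in advance (Lemmas~\ref{lem:absorber} and~\ref{lem:reservoir}). These steps only add edges, so no colour of the forest is lost.
\end{itemize}

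Without this relaxation, or some other source of free endpoints, your local search on cycles does not go through.
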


This bound is asymptotically optimal up to the $O(\sqrt n)$ term. Our result may also be viewed from the perspective of \emph{resilience}.
In general, resilience-type problems ask to what extent a global graph property
persists under local perturbations, and they have been studied extensively in
extremal and random graph theory.
From this viewpoint, Andersen’s theorem treats the extremal case of the complete
graph, while Theorem~\ref{thm:main} shows that the ``nearly all available colours appear on a Hamilton cycle'' phenomenon is stable under arbitrary edge deletions that preserve the Dirac condition.

The core technical contribution is a new structural result for properly edge-coloured Dirac graphs: the existence of a
spanning linear forest with few components that already uses a
near-maximal number of colours.

\begin{theorem}\label{thm:pathforest}
Let $1/2<c\le1$ and let $n$ be sufficiently large.
Every properly edge-coloured $n$-vertex graph $G$ with $\delta(G)\ge cn$
contains a spanning linear forest with at most $\sqrt n$ components and at
least $cn-4\sqrt n$ distinct colours.
\end{theorem}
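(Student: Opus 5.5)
Throughout, call a spanning linear forest $F$ \emph{admissible} if it has at most $\sqrt n$ components, and measure it by the number $\phi(F)$ of distinct colours on its edges. We argue by extremality over admissible forests.

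The starting point is a spanning linear forest with few components and no colour requirement. Since $\delta(G)\ge cn>n/2$, Dirac's theorem gives a Hamilton path, which is admissible. Among all admissible forests, choose $F$ maximising $\phi(F)$. Subject to that, choose $F$ to minimise the number of components, and then to maximise the number of edges carrying a colour used exactly once on $F$.

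Suppose $\phi(F)<cn-4\sqrt n$. Let $C$ be the set of colours on $F$ and $m=|C|$. A \emph{repeated edge} is an edge of $F$ whose colour appears more than once on $F$. The forest has $n-k$ edges, where $k\le \sqrt n$ is the number of components, so it has at least $n-k-m\ge (1-c)n+3\sqrt n$ repeated edges beyond one representative per colour.

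Now fix a vertex $v$. It has at least $cn$ incident edges of distinct colours, since the colouring is proper. At most $m$ of them use colours from $C$, so $v$ is incident to at least $cn-m>4\sqrt n$ \emph{new-coloured} edges. The plan is to use a new-coloured edge $vw$ in a rotation or switching step. Delete a repeated edge next to $v$ or $w$ on $F$ and add $vw$. This raises $\phi$ by one and keeps a linear forest. The cost is that the component count may rise by one, or the deletion may create a cycle that must be broken.

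The concrete step is a Pósa-type rotation. Take an endpoint $x$ of a path $P$ in $F$, and suppose $xy$ is a new-coloured edge with $y$ on some path. If the edge $yy^+$ (with $y^+$ the successor of $y$) is repeated, replace $yy^+$ by $xy$. The components are merely rearranged and $\phi$ increases, contradicting maximality. So every new-coloured neighbour $y$ of every endpoint must have both incident forest edges carrying "unique" colours, meaning colours that appear once on $F$.

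The main obstacle, as I expect, is exploiting this constraint quantitatively. Endpoints have more than $4\sqrt n$ new-coloured neighbours each, and there are $2k$ endpoints. We also need to iterate rotations, as in Pósa's lemma, to generate many endpoints. The second extremality condition, on unique-colour edges, forces the set of such neighbours $y$ to lie among at most $2m$ vertices adjacent to unique-coloured edges.

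I would combine two facts.
\begin{enumerate}
\item Use $c>1/2$ to show that any two endpoint sets of size about $\sqrt n$ have large common neighbourhoods. This allows merging components via edges whose insertion is compensated by deleting a repeated edge.
\item Count the repeated edges, of which there are at least $(1-c)n+3\sqrt n$. Since each endpoint's neighbourhood has size at least $cn$, pigeonhole yields an endpoint adjacent to some vertex incident to a repeated edge via a new colour. Here the $\sqrt n$ slack absorbs the at most $\sqrt n$ endpoints and their rotations.
\end{enumerate}

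If this fails, one instead splits a path at a repeated edge and reconnects with a new-coloured edge. This increases the component count by at most one, which is allowed while $k<\sqrt n$. When $k=\sqrt n$, we first merge two components, using $\delta>n/2$ and Pósa rotations. The bookkeeping balancing components against colours is exactly where the constants $\sqrt n$ and $4\sqrt n$ arise.
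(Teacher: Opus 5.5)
Your one-step rigidity observation is correct and is exactly the base case of what is needed. If an endpoint has a new-coloured edge to a vertex $y$ on another path, then both forest edges at $y$ must carry colours used once on $F$. But the proposal has no mechanism that turns this into a contradiction, and the counting in your item 2 fails. Pigeonholing $|N(x)|\ge cn$ against the $(1-c)n+3\sqrt n$ repeated edges gives about $3\sqrt n$ neighbours of $x$ on repeated edges. Those edges may all carry colours already on $F$, since up to $m\approx cn$ of the edges at $x$ can be old-coloured. Only the new-coloured neighbours are usable, and only $cn-m>4\sqrt n$ of them are guaranteed. Nothing in your counting stops them from all lying among the roughly $cn-O(\sqrt n)$ vertices whose two forest edges are uniquely coloured, with the repeated edges concentrated elsewhere. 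So one round of switching certifies only about $4\sqrt n$ uniquely coloured forest edges, far short of the $m$ colours needed for a contradiction. Iterating P\'osa rotations does not fix this, because each rotation deletes an edge whose colour may be unique. The merging steps in item 1 and your last paragraph are also not made precise: a common neighbour of two endpoints is typically an interior vertex of $F$ of degree $2$.

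The missing idea is an amplification in which the colours certified as unique themselves become usable for further switches. The paper fixes exactly $t=\sqrt n$ paths, so no merging is ever needed. It orients each path, writes $v_1^{(i)}$ for its first vertex, and lets $f(x)$ be the colour of the edge from $x$ to its predecessor. It sets $C_0$ to be the colours absent from $F$ and $C_i=C_{i-1}\cup\{f(x): x\in N_{C_{i-1}}(v_1^{(i)})\setminus\{v_1^{(1)},\ldots,v_1^{(t)}\}\}$. By induction on $i$, every such $f(x)$ is globally unique on $F$. Otherwise, tracing $\chi(v_1^{(i)}x)$ back to the stage at which it entered gives a chain $x=x_0,x_1,\ldots,x_m$ with strictly decreasing stages $s_\ell$. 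Replacing each predecessor edge at $x_\ell$ by $x_\ell v_1^{(s_\ell)}$ keeps a spanning linear forest with $t$ paths, keeps every colour, and adds one colour from $C_0$. Combined with $|N_{C_{i-1}}(v)|\ge cn-|C(F)|+|C_{i-1}\setminus C_0|$, uniqueness gives $|C_i\setminus C_0|\ge |C_{i-1}\setminus C_0|+\delta n-t$ with $\delta n=4\sqrt n$. After $t$ stages the reachable colours exceed $|C(F)|$. Without this iterated, colour-propagating rigidity, your outline does not reach a contradiction.
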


Our construction builds on the classical backtracking algorithm of
Andersen~\cite{A} and Alon, Pokrovskiy, and Sudakov~\cite{APS}, but new ideas
are needed in the Dirac setting to compensate for the potential scarcity of
colours.
The key new ingredient is a rigidity phenomenon for maximal colourful
linear forests. Roughly speaking, once such a forest cannot be extended while preserving
its colour set, many of its edges acquire a form of global uniqueness:
their colours cannot be replicated elsewhere without destroying maximality.
This structural constraint allows us to iteratively expose many new
vertices and colours, leading to a linear growth of the set of reachable
colours.
This rigidity mechanism appears to be specific to the Dirac setting and may be of independent interest.

It is tempting to combine absorption with recent results on long rainbow paths.
For example, Buci\'c, Frederickson, M\"uyesser, Pokrovskiy, and Yepremyan~\cite{BFMPY}
showed that every properly edge-coloured graph $G$ with $\delta(G)\ge cn$ contains a rainbow
path of length $cn-o(n)$. However, long colourful paths (or linear forests) need not be compatible with Hamilton
cycles: even in Dirac graphs there exist linear forests with $cn$ edges such that no
Hamilton cycle contains more than a $(2c-1)$-fraction of their edges, as the following proposition shows. This obstruction explains why one must carefully control the global structure of the forest rather than merely maximize its size.

\begin{proposition}\label{Prop:construction}
Let $1/2 < c < 1$ and let $n \in \mathbb{N}$ be sufficiently large. There exists a graph $G$ on $n$ vertices with $\delta(G) \ge cn$ and a linear forest $F$ in $G$ with $cn$ edges such that every Hamilton cycle in $G$ contains at most $(2c - 1)n$ edges of $F$.
\end{proposition}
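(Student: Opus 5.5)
The plan is to use the standard extremal example for Dirac's theorem: an independent set joined completely to a clique. Fix $1/2<c<1$ and split the vertex set into $A$ and $B$ with $|A|\approx(1-c)n$ and $|B|\approx cn$. Make $A$ independent, make $B$ a clique, and join every vertex of $A$ to every vertex of $B$. Each vertex of $A$ then has degree $|B|\approx cn$, and each vertex of $B$ has degree $n-1$, so $\delta(G)\ge cn$ once the sizes are chosen with the right rounding. For the linear forest $F$ we take a Hamilton path of the clique $G[B]$, possibly extended by a bounded number of edges, as discussed below. This gives about $cn$ edges.

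The key counting step is the following. Let $H$ be any Hamilton cycle in $G$. Since $A$ is independent, every vertex of $A$ is incident to exactly two edges of $H$, and both go to $B$. Hence $e_H(A,B)=2|A|$, and therefore
\[
e_H(B)=n-2|A|=|B|-|A|.
\]
Every edge of $F$ lying inside $B$ that is used by $H$ is counted in $e_H(B)$. So $|E(H)\cap E(F)|\le |B|-|A|+(\text{number of edges of }F\text{ meeting }A)$. With $|A|=(1-c)n$ and $|B|=cn$ this gives $|B|-|A|=(2c-1)n$, which is exactly the bound claimed in Proposition~\ref{Prop:construction}. In that case we would take $F\subseteq G[B]$.

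The main obstacle is the exact bookkeeping. A forest inside $B$ has at most $|B|-1=cn-1$ edges, one short of $cn$. If we instead enlarge $B$ to $cn+1$, then $|A|$ drops by one and $|B|-|A|$ grows by $2$. So the naive construction satisfies the statement only up to an additive $O(1)$. This suffices for the purpose it serves in the paper, namely showing that a long colourful linear forest need not be compatible with a Hamilton cycle. I would first check whether $cn$ and $(2c-1)n$ are meant up to rounding; if so, the construction above finishes the proof.

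If the exact constants matter, I would try to absorb the missing edge without increasing $e_H(B)$. One option is to attach the extra $F$-edge $ab$ at a vertex $a\in A$ and delete a few $A$–$B$ edges at $a$ (keeping its degree at least $cn$ by slightly enlarging $B$ elsewhere). The aim is that any Hamilton cycle using $ab$ is forced to miss some edge of the path in $B$. Making this precise is the step I expect to need the most care.
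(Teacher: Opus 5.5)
Your construction and counting argument are exactly the paper's proof: $A$ independent of size $(1-c)n$, $B$ a clique of size $cn$, all $A$--$B$ edges present, $F\subseteq G[B]$, and every Hamilton cycle $H$ has $e_H(B)=n-2|A|=(2c-1)n$. The off-by-one you flag is genuine and the paper shares it, since its $F$ is a spanning linear forest of $G[B]$ and so has at most $cn-1$ edges; read up to this rounding your argument is complete, so your proposed repair is unnecessary, and it cannot succeed within this graph anyway, because there every linear forest with $cn$ edges lies on a Hamilton cycle together with at least $(2c-1)n+1$ of its edges.
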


\subsection{Perfect matchings in bipartite graphs}

Another well-studied direction concerns perfect matchings in properly edge-coloured $K_{n,n}$.
This problem is equivalent to a classical question concerning permutations and transversals in Latin squares.
An $n \times n$ Latin square is an $n \times n$ array filled with $n$ symbols such that each symbol appears exactly once in every row and every column.
A permutation in a Latin square is a selection of $n$ cells such that no two cells share the same row or column.
If, in addition, the symbols appearing in these cells are all distinct, then such a permutation is called a transversal.
The existence of transversals in Latin squares was highlighted by a conjecture of Ryser~\cite{Ryser}, which asserts that every Latin square of odd order contains a transversal.
Subsequently, Brualdi and Stein~\cite{BR,Stein} formulated a stronger conjecture, proposing that every Latin square of order $n$ admits a partial transversal of size at least $n-1$.


A sequence of works has established the existence of large partial transversals in Latin squares.
Brouwer, A. de Vries, and Wieringa~\cite{BVW} proved that every Latin square of order $n$ contains a partial transversal of size $n - O(\sqrt{n})$.
Hatami and Shor~\cite{HS} improved this to $n - O(\log^2 n)$.
Keevash, Pokrovskiy, Sudakov, and Yepremyan~\cite{KPSY} reduced the error term to $n - O(\log n / \log\log n)$.
Very recently, Montgomery~\cite{Montgomery} proved the existence of transversals of size $n - 1$ for every sufficiently large $n$.

Our next result finds almost maximally colourful perfect matchings in bipartite graphs, which may be viewed as a natural min-degree analogue of the Ryser--Brualdi--Stein problem.

\begin{theorem}\label{thm:bipartite}
Let $1/2 < c \le 1$ and let $n\in\mathbb{N}$ be sufficiently large. Every properly edge-coloured balanced bipartite graph $G$ on $2n$ vertices with $\delta(G)\geq cn$ contains a perfect matching with at least
$cn - O(n^{2/3})$ distinct colours.
\end{theorem}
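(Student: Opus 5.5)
Our plan is to combine a greedy-with-switchings argument, which produces a large rainbow matching, with a robust completion step that turns it into a perfect matching while losing only a few colours. Write $G$ with parts $A,B$, $|A|=|B|=n$, and $\delta(G)\ge cn$ with $c>1/2$. Set the parameter $k=n^{2/3}$.

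\textbf{Step 1: a large rainbow matching.} We find a rainbow matching $M$ of size at least $cn-O(n^{2/3})$. Take a maximum rainbow matching $M$ and suppose $|M|<cn-t$, with $t\approx n^{2/3}$. Let $U_A\subseteq A$ and $U_B\subseteq B$ be the unmatched vertices, so $|U_A|=|U_B|>(1-c)n+t$. Every edge between $U_A$ and $U_B$ must use a colour of $M$. In the usual Woolbright / Brouwer--de Vries--Wieringa style, we then perform alternating switchings: replace an edge $e\in M$ by an edge of the same colour that touches unmatched vertices. This generates a growing family of "reachable" vertices and colours. A vertex of $U_A$ has at least $cn-|M|>t$ neighbours in $U_B$, but it sees only $|M|$ colours. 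Iterating switchings of depth about $n^{1/3}$, each round exposes $\Omega(t)$ new vertices. The degree condition $cn>n/2$ guarantees that reachable sets on the two sides must intersect, and this intersection gives an augmentation, contradicting maximality. The balance between the per-round gain $t$, the number of rounds, and the overlap losses is what makes us choose $t\sim n^{2/3}$ rather than $\sqrt n$.

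\textbf{Step 2: completion via a reserved absorbing structure.} Before Step 1, we set aside a random set $R_A\cup R_B$ of size $\varepsilon n$ on each side, with $\varepsilon=n^{-1/3}$. With high probability every vertex has $(c\pm o(1))|R|$ neighbours in $R$. We run Step 1 inside $G-R$; the degree there is still at least $(c-o(1))(n-|R|)$, so we get a rainbow matching $M$ covering all but about $(1-c)n+O(n^{2/3})$ vertices outside $R$. We then delete at most $O(n^{2/3})$ edges from $M$ so that the leftover vertices can be matched. Because $c>1/2$, Hall's condition holds for the graph between the uncovered vertices $L_A\cup R_A$ and $L_B\cup R_B$ together with the vertices of $M$. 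Concretely, we find a perfect matching of $G[V\setminus V(M)]$ after possibly releasing some edges of $M$. The standard fact is that any balanced bipartite graph with minimum degree above half its part size has a perfect matching. We therefore need the leftover graph, formed by the unmatched vertices plus $R$, to satisfy $\delta>\frac12|\text{part}|$. We enforce this by releasing every edge of $M$ incident to a vertex whose degree into the leftover set is too small. The proper colouring together with the density from $c>1/2$ bounds the number of such low-degree vertices by $O(n^{2/3})$, via a double count of non-edges between the leftover set and its complement. The final perfect matching contains $M$ minus the released edges, so it has at least $cn-O(n^{2/3})$ colours.

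\textbf{Main obstacle.} The expected hard step is Step 1. A maximal rainbow matching in a non-complete graph need not reach size $cn$ under naive switching, because colours may be scarce, much as in the Hamilton cycle discussion. The degree-overlap argument must be quantified carefully so that iterated switchings stay disjoint. That accounting is where the $n^{2/3}$ error arises: about $n^{1/3}$ rounds, with losses of order $n^{1/3}\cdot n^{1/3}$ per round. Our first attempt will be to adapt the rigidity idea behind Theorem~\ref{thm:pathforest}. An unmatched vertex can only see colours of $M$, and those colours must be "unique", meaning they cannot be re-placed elsewhere without breaking maximality. This yields linear growth of the set of reachable colours.
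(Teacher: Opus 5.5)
\paragraph{The gap is Step~2.} A rainbow matching of size $cn-O(n^{2/3})$ cannot in general be completed to a perfect matching by releasing only $O(n^{2/3})$ of its edges. This is exactly the obstruction recorded in Proposition~\ref{Prop:construction2}.

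Such a matching leaves about $(1-c)n$ vertices uncovered on each side, and $\delta(G)\ge cn$ says nothing about edges inside a set of that size. A vertex may have all $cn$ of its neighbours among the roughly $cn$ covered vertices, so its degree into the leftover set can be $0$, far below half the part size.

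Concretely, let $X=X_1\cup X_2$ and $Y=Y_1\cup Y_2$ with $|X_1|=|Y_1|=cn$ and no edges between $X_2$ and $Y_2$. Suppose Step~1 returns a rainbow matching $M$ of $G-R$ with $cn-O(n^{2/3})$ edges inside $G[X_1,Y_1]$. This is not artificial. For suitable $n$ one can take $G$ to be $cn$-regular, with $X_2$ complete to $Y_1$, $Y_2$ complete to $X_1$ and $G[X_1,Y_1]$ being $(2c-1)n$-regular. One can colour it properly with only $cn$ colours so that $G[X_1,Y_1]$ has a rainbow perfect matching. Then such an $M$ is within $O(n^{2/3})$ of a \emph{maximum} rainbow matching, so nothing in Step~1 excludes it.

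Every vertex of $X_2$ has no neighbours in $Y_2$, so it must be matched into $Y_1$. The uncovered part of $Y_1$ has size only $O(n^{2/3})$. Hence any completion discards at least $(1-c)n-O(n^{2/3})$ edges of $M$, and the reservoir of size $n^{2/3}$ is irrelevant. Your repair rule does not even apply: the deficient vertices are the uncovered vertices of $X_2$, not endpoints of $M$-edges. Nor can a double count that uses the proper colouring help, because whether the leftover vertices can be matched is a property of the uncoloured graph.

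\paragraph{The missing idea.} Near-perfectness has to be built into the colour optimisation from the outset rather than repaired afterwards. The paper fixes $t\ge n^{2/3}$ and chooses, among all matchings with exactly $n-t$ edges (not necessarily rainbow), one with the maximum number of colours.

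Completion is then cheap, because only $t$ vertices per side are uncovered. For uncovered $x\in X$ and $y\in Y$ there are at least $2\varepsilon n-2t$ edges $ab\in M$ with $ay,bx\in E(G)$, so $t$ switches cost at most $t$ edges.

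All the work goes into showing that this matching has at least $cn-8t$ colours (Proposition~\ref{prop:matching}). Lemma~\ref{lem:matchingkey} builds nested rainbow submatchings $H_1\subset\cdots\subset H_\ell\subseteq M$ with $e(H_i)=4ti$ by a trace-back argument from the $2t$ uncovered vertices. Each of these has at least $(i+2)4t$ neighbours via colours in $C_0\cup C(H_i)$, all lying in $V(M)$. This is combined with a double count of such edges between the uncovered set and $V(M)$. The exponent $2/3$ comes from balancing $8t^{1/2}n$ against $|R|\cdot 4t=8t^2$.

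So your Step~1, essentially a known Gy\'arf\'as--S\'ark\"ozy-type fact, solves the wrong intermediate problem. Reaching $cn$ colours is not the difficulty; compatibility with a perfect matching is.
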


Equivalently,~\Cref{thm:bipartite} reads as follows. Every subset of an $n\times n$ Latin square with at least $cn$ cells in each row and column contains a permutation with at least $cn - O(n^{2/3})$ distinct symbols.

Our proof of Theorem~\ref{thm:bipartite} is motivated by the work of Gy\'{a}rf\'{a}s and S\'{a}rk\"{o}zy~\cite{GS}, who proved that every properly edge-coloured graph $G$ with $\delta(G) \le n/2$ contains a rainbow matching of size $\delta(G)-o(n)$.
However, a matching of size $\delta(G)-o(n)$ is not necessarily extendable to a perfect matching, and there does not always exist a perfect matching that contains all but $o(n)$ edges of such a matching, as shown below.

\begin{proposition}\label{Prop:construction2}
Let $1/2 < c < 1$ and let $n \in \mathbb{N}$ be sufficiently large. There exists a balanced bipartite graph $G$ on $2n$ vertices with $\delta(G) \ge cn$ and a matching $M\subset G$ with $cn$ edges such that every perfect matching in $G$ contains at most $(2c - 1)n$ edges of $M$.
\end{proposition}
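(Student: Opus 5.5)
Let $k=(1-c)n$ and build $G$ on vertex classes $A=\{a_1,\dots,a_n\}$ and $B=\{b_1,\dots,b_n\}$, each split as $A=A_1\cup A_2$, $B=B_1\cup B_2$ with $|A_1|=|B_1|=k$ and $|A_2|=|B_2|=n-k=cn$ (rounding as needed). Take $G$ to be $K_{n,n}$ with all edges between $A_1$ and $B_1$ removed. Then each vertex of $A_1$ has degree $|B_2|=cn$, each vertex of $A_2$ has degree $n\ge cn$, and symmetrically for $B$. Hence $\delta(G)\ge cn$. Since $c>1/2$, we have $k<n-k$, so the construction is well-defined and $G$ has perfect matchings.

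Every perfect matching $P$ of $G$ must cover $A_1$. Because $A_1$ has no neighbours in $B_1$, $P$ matches $A_1$ into $B_2$, using $k$ vertices of $B_2$. Symmetrically, $P$ matches $B_1$ into $A_2$, using $k$ vertices of $A_2$. So at most $(n-k)-k=(2c-1)n$ edges of $P$ lie inside $A_2\times B_2$.

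Choose $M$ to be a perfect matching between $A_2$ and $B_2$. It has $n-k=cn$ edges, all inside $A_2\times B_2$. Any edge of $M$ also in $P$ lies in $A_2\times B_2$, so $|P\cap M|\le (2c-1)n$, as required.

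The remaining points are routine. If $cn$ is not an integer, set $k=\lfloor(1-c)n\rfloor$ and take $M$ to be a sub-matching of size $\lceil cn\rceil$ (or interpret "$cn$ edges" up to rounding), adjusting the counts by $O(1)$. No colouring is involved, so there is no real obstacle; the only thing to check is the bookkeeping of the degree bound under rounding. The same idea motivates Proposition~\ref{Prop:construction}, where a sparse side forces the Hamilton cycle to spend many edges leaving it.
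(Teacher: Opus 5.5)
Your proposal is correct and is essentially the paper's proof: you use the same graph ($K_{n,n}$ with all edges between the two parts of size $(1-c)n$ deleted), the same matching $M$ between the two parts of size $cn$, and the same count showing that at most $cn-(1-c)n=(2c-1)n$ edges of any perfect matching can lie in $M$, because the small side must be matched into the large opposite side. The paper tacitly assumes $cn$ is an integer. Your remark that rounding costs an additive $O(1)$ is accurate, and is more careful than the paper's treatment.
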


\noindent\textbf{Organisation of the paper.} The remainder of this paper is organised as follows.
In Section~\ref{sec:Hamiltoncycle}, we establish the path-forest Theorem~\ref{thm:pathforest} and then combine it with an absorption argument to prove Theorem~\ref{thm:main}.
In Section~\ref{sec:perfectmatching}, we prove Theorem~\ref{thm:bipartite}.
In Section~\ref{sec:badconstruction}, we present the constructions for Propositions~\ref{Prop:construction} and~\ref{Prop:construction2}.
Concluding remarks are in Section~\ref{sec:conclusion}.

\medskip

\section{Colourful Hamilton cycles in Dirac graphs}\label{sec:Hamiltoncycle}

In this section, we prove Theorems~\ref{thm:main} and \ref{thm:pathforest}. We first prove the existence of a spanning linear forest in a properly edge-coloured Dirac graph that uses almost as many distinct colours as possible (see Proposition~\ref{prop:pathforest}), which directly implies
Theorem~\ref{thm:pathforest}. Then combining with absorption method for Hamilton cycles, we prove Theorem~\ref{thm:main}.

\subsection{Colourful path-forests}
In this subsection we prove the key structural proposition for path-forest, as follows.

\begin{proposition}\label{prop:pathforest}
For $1/2<c\le 1$, there exists $n_0\in\mathbb N$ such that for all $n\ge n_0$ the following holds. Let $\delta=\delta(n)$ and $t=t(n)$ satisfy $2c<t\delta<\delta^2 n/2$. If $G$ is a graph on $n$ vertices with $\delta(G)\ge cn$ and $\chi$ is a proper edge-colouring of $G$, then $G$ contains a spanning linear forest $F$ consisting of $t$ vertex-disjoint paths and containing at least $(c-\delta)n$ distinct colours.
\end{proposition}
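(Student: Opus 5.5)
We argue by extremality. Among all spanning linear forests of $G$ with exactly $t$ paths (these exist: take a Hamilton path, which exists by Dirac's theorem since $c>1/2$, and delete $t-1$ edges), choose $F$ maximising the number of distinct colours $|\chi(F)|$. Subject to this, minimise the number of repeated edges, i.e.\ $e(F)-|\chi(F)|$. Call an edge of $F$ \emph{unique} if its colour appears exactly once on $F$. Suppose for contradiction that $|\chi(F)|<(c-\delta)n$. The plan is to find a local modification that raises the colour count while keeping the number of paths at $t$. This will be an Andersen/Alon--Pokrovskiy--Sudakov style backtracking (rotation) argument applied to the path endpoints.

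\textbf{Rotation step.} Fix an endpoint $v$ of some path $P$ of $F$. Since $\chi$ is proper, $v$ sees at least $cn$ distinct colours. Fewer than $(c-\delta)n$ of them lie in $\chi(F)$, so more than $\delta n$ edges $vw$ carry colours missing from $F$. For each such $w$, consider the forest obtained by adding $vw$ and deleting one edge of $F$ at $w$: either the edge on the side that restores a linear forest (a Pósa rotation if $w\in P$), or a splitting edge if $w$ lies in another path. This keeps exactly $t$ paths and adds a new colour. By maximality, the deleted edge must be unique, and its colour must not reappear anywhere; otherwise the colour count would rise. Also, no endpoint-to-endpoint edge of a new colour may join two different paths together with a split elsewhere that removes only a non-unique edge. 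Iterating from the new endpoints gives a set $S_k$ of reachable endpoints after $k$ rounds of rotations. Each round exposes $>\delta n$ new colours absent from $F$ at each reachable endpoint. The rigidity claim is that the forest edges destroyed by these rotations are all unique, with distinct colours.

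\textbf{Growth and contradiction.} We count: each reachable endpoint contributes $\delta n$ missing colours, and rotations from $|S_k|$ endpoints force many distinct unique edges adjacent to new vertices. So the reachable set grows linearly, roughly $|S_{k+1}|\ge |S_k|+\delta n/2$ up to overlaps controlled by properness. The $t$ paths give $2t$ endpoints to start from. The hypothesis $t\delta > 2c$ should ensure that the total number of new colours seen at the endpoints, about $2t\cdot\delta n$ counted with multiplicity (each colour at most $n/2$ times, as the colouring is proper), exceeds $cn$. Meanwhile $t\delta<\delta^2 n/2$, i.e.\ $t<\delta n/2$, leaves room for the rotations not to interfere across paths. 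Eventually we find an endpoint $x$ and a missing colour edge $xy$ such that the forced deleted edge is non-unique, or $y$ is an endpoint of another path, or some missing colour appears twice. Any of these yields a forest with $t$ paths and more colours, or equally many colours and fewer repeated edges, contradicting the choice of $F$.

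\textbf{Main obstacle.} The hardest step is the rigidity/bookkeeping in the iteration. We must show that successive rotations do not undo one another, and that the unique edges forced at different stages are genuinely distinct. That way the reachable colour set truly grows linearly rather than cycling among $O(1/\delta)$ configurations. To handle this, we will restrict each stage to rotations whose pivot lies outside the previously used segments: we discard at most $k\cdot t$ vertices per stage, which is $\ll \delta n$ precisely because of the upper bound $t\delta<\delta^2n/2$. We then run the argument for about $2/\delta$ stages, which suffices to reach more than $cn$ colours, a contradiction.
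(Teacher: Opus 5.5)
Your proposal has a genuine gap at the growth step, which is where the substance of the proof lies. The only edges you treat as usable at an endpoint are the $>\delta n$ edges whose colours are missing from $F$, and you certify one unique forest edge for each of them. Nothing prevents every reachable endpoint, original or produced by rotation, from sending these edges into the same set of roughly $\delta n$ vertices. Properness controls the colours at one vertex, not overlaps between the neighbourhoods of different endpoints. So you may never certify more than $O(\delta n)$ unique colours, and $|S_{k+1}|\ge|S_k|+\delta n/2$ is unsupported.

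The count meant to use $t\delta>2c$ is also not valid: $2t\delta n$ incidences with multiplicity up to $n/2$ give only $4t\delta$ colours. In any case, what must exceed $|\chi(F)|$ is the number of colours of $F$ certified unique, not the number of missing colours. The proposed bookkeeping does not fit the parameters either. Discarding $kt$ vertices at stage $k\approx 2/\delta$ costs up to $2t/\delta$, which under $t<\delta n/2$ is only bounded by $n$, not $\ll\delta n$. Finally, iterating rotations changes the forest and its set of missing colours. Maximality of the original $F$ constrains later steps only if you show the whole sequence of swaps is realisable as one valid modification of $F$.

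The missing idea is to feed certified colours back in as usable colours. The paper keeps $F_0$ fixed and lets $f(x)$ be the colour of the edge from $x$ to its predecessor. Using only the start vertex $v_1^{(i)}$ of the $i$-th path at stage $i$, it sets $C_i=C_{i-1}\cup\{f(x):x\in N_{C_{i-1}}(v_1^{(i)})\setminus\{v_1^{(1)},\dots,v_1^{(t)}\}\}$. The rigidity lemma says each such $f(x)$ is globally unique in $F_0$. Otherwise one traces back a chain $x=x_0,x_1,\dots,x_m$ through stages $i=s_0>s_1>\dots>s_m=1$ ending in a $C_0$-edge. One then performs all swaps simultaneously: delete the predecessor edge of $x_\ell$ and add $x_\ell v_1^{(s_\ell)}$. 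Because the start vertices used are distinct, the result is still a linear forest with $t$ paths, with one more colour.

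Growth is then automatic, regardless of overlaps. By properness, $|N_{C_{i-1}}(v_1^{(i)})|\ge \delta n+|C_{i-1}\setminus C_0|$, since only colours in $C(F_0)\setminus C_{i-1}$ are excluded. So the certified set gains at least $\delta n-t>\delta n/2$ colours per stage. Each stage consumes one path, so there are $t$ stages, and $t\delta>2c$ gives $|C_t\setminus C_0|>t\delta n/2>cn$. This contradicts $|C(F_0)|<(c-\delta)n$.
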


Proposition~\ref{prop:pathforest} directly implies Theorem~\ref{thm:pathforest}, by choosing $t=n^{1/2}$ and $\delta=4n^{-1/2}$.

The proof proceeds by contradiction. Consider a spanning linear forest with $t$ components that maximises the number of distinct colours. Suppose that this forest uses less than $(c-\delta)n$ distinct colours, we study how unused colours can be propagated into the forest via its path endpoints. This leads to an iterative expansion of a family of colour sets $C_1 \subseteq \cdots \subseteq C_t$, where at each step new colours are ``reached'' through coloured neighbourhoods of an endpoint. The key observation is that maximality forces a strong rigidity: each such reachable vertex contributes a globally unique colour among the forest edges, since any collision would allow a backtracking modification that strictly increases the number of colours. Using the minimum degree condition and properness of the colouring, this forces the size of reachable colour sets to grow linearly, ultimately contradicting the maximality of the forest.

Let $G$ be a graph on $n$ vertices with $\delta(G)\ge cn$ where $1/2<c\le 1$, equipped with a proper
edge-colouring $\chi$.
Fix an integer $t$, and let
\begin{equation*}
\mathcal F=\big\{F\subset G:\; F \text{ is a spanning linear forest with exactly $t$ paths}\big\}.
\end{equation*}
By Dirac's theorem, $G$ contains a Hamilton cycle, and hence $\mathcal F\neq\varnothing$ for all $t\le n/2$. Choose a spanning linear forest $F_0\in\mathcal F$ that maximises the number of distinct colours appearing on its edges. Suppose that $F_0$ uses fewer than $(c-\delta)n$ distinct colours.

Write $F_0=\bigcup_{i=1}^t P_i$, where 
\[
P_i = v_1^{(i)}v_2^{(i)}\cdots v_{m_i}^{(i)}
\]
are vertex-disjoint paths. For a vertex $v_j^{(i)}$ with $1\le i\le t$ and $2\le j\le m_i$, let
\[
f\big(v_j^{(i)}\big) := \chi\big(v_{j-1}^{(i)}v_j^{(i)}\big)
\]
be the colour of the edge connecting $v_j^{(i)}$ to its predecessor on the path.
We refer to $v_{j-1}^{(i)}$ as the \emph{prefix} of $v_j^{(i)}$ and write
$P(v_j^{(i)})=v_{j-1}^{(i)}$.

Let $C$ denote the set of all colours used by $\chi$, and let $C_0$ be the set of colours that do not appear in $F_0$.
We now define an increasing sequence of colour sets
\[
C_0\subseteq C_1\subseteq \cdots \subseteq C_t
\]
as follows.
For each $i\in[t]$, let
\begin{equation}\label{eq:colourset}
C_i :=
\Big\{ f(x)\;:\;
x\in N_{C_{i-1}}\!\big(v_1^{(i)}\big)
      \setminus \{v_1^{(1)},\ldots,v_1^{(t)}\}
\Big\}
\;\cup\; C_{i-1},
\end{equation}
where $N_{C_{i-1}}(v)=\{u:uv\in E(G){\rm \ and}\ \chi(uv)\in C_{i-1}\}$.

By construction, $C_i\setminus C_0$ consists of colours appearing on edges of $F_0$.
In particular,
\[
C_t\setminus C_0 \subseteq C\setminus C_0,
\]
so it suffices to show that $|C_t\setminus C_0|>(c-\delta)n$ in order to contradict the
maximality of $F_0$.


The key observation is the following lemma. Whenever a vertex $x$ can be reached from a path endpoint via colours already
available at an earlier stage, the colour of the forest-edge incident to $x$ must
be \emph{globally unique} in $F_0$. This rigidity is the core mechanism that drives the growth of the colour sets
$C_i$.

\begin{lemma}\label{lem:keyobs}
Let $F_0\in\mathcal F$ and $C_i$, $i=0,1,\ldots,t$ be defined as above. 
Suppose that the number of colours in $F_0$ is less than $(c-\delta)n$.
Then for every $1\le i\le t$ and every
\[
x\in N_{C_{i-1}}\!\big(v_1^{(i)}\big)\setminus \{v_1^{(1)},\ldots,v_1^{(t)}\} \quad\text{and}\quad
y\in V(F_0)\setminus\{x,v_1^{(1)},\ldots,v_1^{(t)}\},
\]
we have $f(x)\neq f(y)$.
\end{lemma}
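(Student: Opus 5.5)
The plan is to argue by contradiction, using a backtracking (switching) modification of $F_0$ that strictly increases the number of colours. We induct on $i$. Fix $i$, a vertex $x\in N_{C_{i-1}}(v_1^{(i)})\setminus\{v_1^{(1)},\dots,v_1^{(t)}\}$ and a vertex $y\ne x$ with $f(y)=f(x)$. Since $x$ is not a start vertex and $x\neq y$, the forest edges $P(x)x$ and $P(y)y$ are distinct edges of $F_0$ with the same colour.

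\emph{The basic switch.} Let $\alpha=\chi(v_1^{(i)}x)\in C_{i-1}$. Delete $P(x)x$ and add $v_1^{(i)}x$. If $x\notin P_i$, this cuts the path through $x$ and glues its tail $x\cdots$ onto the reversed $P_i$. If $x\in P_i$, the path $v_1^{(i)}\cdots P(x)\,x\cdots v_{m_i}^{(i)}$ becomes $P(x)\cdots v_1^{(i)}\,x\cdots v_{m_i}^{(i)}$. In both cases we obtain a spanning linear forest with exactly $t$ paths.

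The colour $f(x)$ survives, since the edge $P(y)y$ is still present. The colour $\alpha$ is gained. So if $\alpha\in C_0$, the new forest has strictly more colours than $F_0$, contradicting maximality. This settles $i=1$ completely, and it settles any $i$ with $\alpha\in C_0$.

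\emph{The chain of switches.} If $\alpha\in C_{i-1}\setminus C_0$, trace back a chain. Let $j_1<i$ be the least index with $\alpha\in C_{j_1}$. Then $\alpha=f(x_1)$ for some $x_1\in N_{C_{j_1-1}}(v_1^{(j_1)})\setminus\{v_1^{(1)},\dots,v_1^{(t)}\}$, reached via a colour $\alpha_1\in C_{j_1-1}$. Iterate this to get stages $i>j_1>j_2>\cdots>j_s$ and vertices $x=x_0,x_1,\dots,x_s$, ending with $\alpha_s\in C_0$.

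The colours $f(x_k)$ for $k\ge1$ lie in distinct layers $C_{j_k}\setminus C_{j_k-1}$, so the $x_k$ are distinct. By the induction hypothesis each $f(x_k)$, $k\ge1$, is the colour of a unique edge of $F_0$.

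Now perform all $s+1$ switches, at the distinct start vertices $v_1^{(j_s)},\dots,v_1^{(j_1)},v_1^{(i)}$, processing stages in increasing order starting from the $C_0$ end. The colour bookkeeping telescopes:
\begin{itemize}
\item each lost colour $f(x_k)$ with $k\ge1$ is re-supplied by the added edge $v_1^{(j_{k-1})}x_{k-1}$, whose colour is $\alpha_{k-1}=f(x_k)$;
\item the colour $f(x_0)=f(x)$ is retained by the edge at $y$;
\item the colour $\alpha_s\in C_0$ is new.
\end{itemize}
Hence the number of colours strictly increases, which is a contradiction.

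For the edge at $y$ to survive we need $y\ne x_k$ for all $k$. If $y=x_k$ for some $k\ge1$, then $f(x)=f(x_k)$, so $f(x)$ is already in $C_{i-1}$. In that case uniqueness of $f(x_k)$ from the induction hypothesis gives the contradiction directly, since $x\neq x_k$ would be a second edge of colour $f(x_k)$. Alternatively, take the chain minimal.

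\emph{Main obstacle.} The delicate step is verifying that the composite switching still yields a spanning linear forest with exactly $t$ paths. After earlier switches have reversed or re-glued paths, the vertex $x_k$ might lie on the same path as $v_1^{(j_k)}$ but on the ``wrong side'' of its current predecessor edge, in which case deleting it and adding $v_1^{(j_k)}x_k$ would create a cycle. Each switch preserves degrees and keeps every $v_1^{(\ell)}$ as an endpoint, because only edges entering non-start vertices are deleted and the removed edges are original edges of $F_0$. I would first try to prove by induction on the number of switches performed that, after processing stages in increasing order, each $v_1^{(j)}$ not yet used is still a path endpoint and the edge $P(x_k)x_k$ separates $x_k$ from $v_1^{(j_k)}$ whenever they share a path. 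If this fails, the fallback is to choose the chain lexicographically minimal so that later switches never touch the paths modified by earlier ones.
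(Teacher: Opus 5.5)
Your argument follows the paper's proof. It uses induction on $i$, the single switch at $v_1^{(i)}$ when $\chi(v_1^{(i)}x)\in C_0$, and otherwise a trace-back chain $x=x_0,x_1,\dots,x_s$ through strictly decreasing stages, with all switches performed together and the colours telescoping. Your stopping rule $\alpha_s\in C_0$ is in fact more accurate than the paper's ``$s_m=1$''.

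The step you leave open is that the composite switch is again a spanning linear forest with exactly $t$ paths. The paper only asserts this step, and as written your proposal is incomplete there. It is not a real obstacle, and your first plan works:

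\begin{itemize}
\item Orient each $F_0$-edge as $P(u)\to u$ and each added edge as $v_1^{(j_k)}\to x_k$.
\item The composite switch only replaces the in-edge of each $x_k$, and these are distinct non-start vertices. So afterwards every non-start vertex has exactly one in-edge and every start vertex has none.
\item Since in-degrees are at most $1$, any cycle in the underlying graph would be a directed cycle. Such a cycle cannot use an added edge, because its tail is a start vertex with no in-edge. It cannot consist of $F_0$-edges alone either. So the result is acyclic with exactly $n-t$ edges.
\item All degrees are at most $2$. Non-start vertices gain no out-edges, and each start vertex is used at most once because the stages $j_k$ are distinct.
\end{itemize}

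Hence the result is a linear forest with exactly $t$ components, whatever the order of the switches. In your sequential picture, when $x_k$ is processed its original edge $P(x_k)x_k$ is still present. Deleting it cuts off the descendants of $x_k$, which contain no start vertex, so $v_1^{(j_k)}$ is never on the ``wrong side''. The lexicographic fallback is unnecessary.

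Two smaller points need fixing. First, your remark that each switch ``preserves degrees and keeps every $v_1^{(\ell)}$ as an endpoint'' is false as stated: $v_1^{(j_k)}$ gains an edge and $P(x_k)$ loses one. The invariant you actually need is the one above: start vertices never acquire an in-edge, and each is used at most once. Second, you should dispose of the case $f(x)\in C_{i-1}$ at the outset via the induction hypothesis. This gives not only $y\neq x_k$ but also $x_0\neq x_k$ for $k\ge 1$. Your distinct-layers argument does not cover the latter, and it is needed so that no vertex loses its in-edge twice.
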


\begin{proof}
The proof is by induction on $i$.
The idea is that any colour collision of the type described in the
statement would allow us to locally modify the forest to introduce a new
colour from $C_0$ while preserving all existing colours, contradicting the
maximality of $F_0$.

Since the colours of edges incident to $v_1^{(i)}$ are all distinct by the properness of $\chi$, we have 
\begin{align*}
    |N_{C_0}(v_1^{(i)})\setminus \{v_1^{(1)},\ldots,v_1^{(t)}\}|\ge cn - (c-\delta)n -t \ge \frac{\delta }{2}n,
\end{align*}
which implies that $N_{C_{0}}\!\big(v_1^{(i)})\neq\emptyset$.
For the base case $i=1$, we prove a slightly stronger statement that for every $i\in[t]$ and every $x\in N_{C_{0}}\!\big(v_1^{(i)}\big)\setminus \{v_1^{(1)},\ldots,v_1^{(t)}\}$ and 
$y\in V(F_0)\setminus\{x,v_1^{(1)},\ldots,v_1^{(t)}\}$, we have $f(x)\neq f(y)$. Indeed, if there exists a vertex $x\in N_{C_{0}}\!\big(v_1^{(i)}\big)\setminus \{v_1^{(1)},\ldots,v_1^{(t)}\}$ such that $f(x)=f(y)$ for some $y\in V(F_0)\setminus\{x,v_1^{(1)},\ldots,v_1^{(t)}\}$, denote this vertex $x$ by $v_j^{(p)}$ for some $p\in[t]$ and $j\in[2,m_p]$.
Construct $F_1$ from $F_0$ by deleting the $F_0$-edge $xv_{j-1}^{(p)}$ and adding the edge
$v_1^{(i)}x$.
Note that $F_1$ is still a spanning linear forest with exactly $t$ components. The added edge $v_1^{(i)}x$ has colour in $C_0$, hence introduces a new colour.
Removing $xv_{j-1}^{(p)}$ does not decrease the number of colours, since the same
colour $f(x)$ still appears on the edge incident to $y$.
Thus $F_1$ uses more colours than $F_0$, contradicting the maximality of $F_0$.

Assume that the conclusion of the lemma holds for all indices $\ell<i$.
Suppose for contradiction that there exist vertices
\[
x\in N_{C_{i-1}}\!\big(v_1^{(i)}\big)\setminus\{v_1^{(1)},\ldots,v_1^{(t)}\}
\quad\text{and}\quad
y\in V(F_0)\setminus\{x,v_1^{(1)},\ldots,v_1^{(t)}\}
\]
with $f(x)=f(y)$. Note that by the stronger statement we proved in the base case, we have $\chi(v_1^{(i)}x)\in C_{i-1}\setminus C_0$. 
By the definition of $C_{i-1}$ (see \eqref{eq:colourset}), we have $\chi(v_1^{(i)}x)=f(x_1)$ for some vertex
\[
x_1\in N_{C_{s_1-1}}\!\big(v_1^{(s_1)}\big)\setminus \{v_1^{(1)},\ldots,v_1 ^{(t)}\},
\]
where $s_1<i$.

Similarly, we must have $\chi(v_1^{(s_1)}x_1)\in C_{s_1-1}\setminus C_0$, and hence we may find a vertex 
\[
x_2\in N_{C_{s_2-1}}\!\big(v_1^{(s_2)}\big)\setminus \{v_1^{(1)},\ldots,v_1 ^{(t)}\}
\]
for some $s_2<s_1$ such that $\chi\big(v_1^{(s_1)}x_1\big)=f(x_2).$

Repeating this argument, we obtain a strictly decreasing sequence of indices
\[
i=s_0>s_1>\cdots>s_m=1
\]
and vertices
\[
x=x_0,x_1,\ldots,x_m \in V(F_0)\setminus \{v_1^{(1)},\ldots,v_1 ^{(t)}\}
\]
such that $x_m\in N_{C_0}\!\big(v_1^{(1)}\big)$.

By the induction hypothesis, the vertices $x_0,\ldots,x_m$ are all distinct, since
any repetition would violate the uniqueness of colours at an earlier stage.
This implies that the colours $f(x_0),\ldots,f(x_m)$ are all distinct.

We now modify $F_0$ as follows.
For each $0\le\ell\le m$, delete the unique forest edge incident to $x_\ell$ whose
colour is $f(x_\ell)$, and add the edge $x_\ell v_1^{(s_\ell)}$.
The resulting graph $F_1$ is again a spanning linear forest with $t$ components.

For each $\ell<m$, the added edge $x_\ell v_1^{(s_\ell)}$ has the same colour as the
deleted edge incident to $x_{\ell+1}$, so no colour is lost.
At the final step, the edge $x_m v_1^{(1)}$ introduces a colour from $C_0$.
Thus $F_1$ uses strictly more colours than $F_0$, contradicting the maximality of $F_0$.

This contradiction completes the induction and proves the lemma.
\end{proof}

We are now ready to prove Proposition~\ref{prop:pathforest}.

\begin{proof}[Proof of Proposition~\ref{prop:pathforest}]
Let $F_0=\bigcup_{i=1}^t P_i\in\mathcal F$ be a spanning linear forest with exactly $t$
components that maximises the number of distinct colours, and suppose for
contradiction that $F_0$ uses fewer than $(c-\delta)n$ colours.
Let $C_0\subset C_1\subset\cdots\subset C_t$ be the colour sets defined in
\eqref{eq:colourset}, and let $C(F_0)=\chi(E(F_0))$ denote the set of colours appearing
in $F_0$.
Since $C_t\setminus C_0\subseteq C(F_0)$, it suffices to show that
\[
|C_t\setminus C_0|>(c-\delta)n,
\]
which would contradict the maximality of $F_0$.

We first establish a lower bound on coloured neighbourhoods. Fix $1\le i\le t$ and a vertex $v\in V(G)$.
Since $\chi$ is a proper edge-colouring, all edges incident to $v$ have distinct
colours.
An edge $vw$ is not counted in $N_{C_i}(v)$ only if its colour lies in
$(C(F_0)\cup C_0)\setminus C_i$.
Therefore,
\[
|N_{C_i}(v)|
\ge |N(v)| - |(C(F_0)\cup C_0)\setminus C_i|.
\]
Because $F_0$ is spanning, we have $V(F_0)=V(G)$.
Moreover, $C_0\subset C_i$ and $C_0\cap C(F_0)=\varnothing$, so
\[
|(C(F_0)\cup C_0)\setminus C_i|
=|C(F_0)|-|C_i\setminus C_0|.
\]
Using $\delta(G)\ge cn$ and $|C(F_0)|\le(c-\delta)n$, we obtain
\begin{equation}\label{eq:neighbour-lower}
|N_{C_i}(v)|\ge cn-|C(F_0)|+|C_i\setminus C_0|
\ge \delta n+|C_i\setminus C_0|.
\end{equation}

We next show the growth of the colour sets $C_i\setminus C_0$.
Fix $1\le i\le t$.
By the definition of $C_i$, we have
\[
C_i\setminus C_0
\supseteq
\Big\{f(x):x\in N_{C_{i-1}}(v_1^{(i)})\setminus\{v_1^{(1)},\ldots,v_1^{(t)}\}\Big\}.
\]
By Lemma~\ref{lem:keyobs}, all colours $f(x)$ appearing in the above set are distinct.
Hence
\begin{equation}\label{eq:colour-growth}
|C_i\setminus C_0|
\ge |N_{C_{i-1}}(v_1^{(i)})|-t.
\end{equation}

Applying \eqref{eq:neighbour-lower} with $v=v_1^{(i)}$ and $i-1$ in place of $i$, we get
\[
|N_{C_{i-1}}(v_1^{(i)})|
\ge |C_{i-1}\setminus C_0|+\delta n.
\]
Combining this with \eqref{eq:colour-growth} yields the recurrence
\begin{equation}\label{eq:recurrence}
|C_i\setminus C_0|
\ge |C_{i-1}\setminus C_0|+(\delta n-t)
\qquad\text{for all }1\le i\le t.
\end{equation}

Summing \eqref{eq:recurrence} over $i=1,\ldots,t$, we obtain
\[
|C_t\setminus C_0|
=\sum_{i=1}^t\big(|C_i\setminus C_0|-|C_{i-1}\setminus C_0|\big)
\ge t(\delta n-t).
\]
By assumption, $t<\delta n/2$, and hence $\delta n-t>\delta n/2$.
Together with $t\delta>2c$, this implies
\[
|C_t\setminus C_0|
> \frac{t\delta n}{2}
> (c-\delta)n,
\]
contradicting the assumption that $F_0$ uses fewer than $(c-\delta)n$ colours.

This contradiction completes the proof.
\end{proof}

\subsection{Proof of Theorem~\ref{thm:main}}

In this subsection we complete the proof of Theorem~\ref{thm:main} by turning the
colourful linear forest obtained in Proposition~\ref{prop:pathforest} into a
Hamilton cycle.
Our argument follows the standard \emph{absorption method} introduced
in~\cite{RRS}. The strategy is as follows.
First, we construct a short \emph{absorbing path} $A$ with the property that any
small set of vertices can later be incorporated into $A$ while preserving its
endpoints.
Second, we choose a small \emph{reservoir} set $R$ that allows us to connect many
vertex-disjoint paths.
After removing $A\cup R$, we apply Proposition~\ref{prop:pathforest} to the
remaining graph to obtain a spanning linear forest with many colours.
Finally, we use vertices from $R$ to merge all paths into a single cycle and then
absorb the remaining vertices via $A$.
Throughout this process, the number of distinct colours on the colourful forest is preserved.

We begin with the absorbing lemma, which is Lemma~3.1 from~\cite{PY}.

\begin{lemma}[Absorber for Hamilton cycles]\label{lem:absorber}
For every $0<\varepsilon\leq 1/2$ and $0<d<2^{-10}\varepsilon$, there exist constants
$C=C(d,\varepsilon)$ and $n_0=n_0(d,\varepsilon)$ such that the following holds
for all $n\ge n_0$ and all integers $m$ with $(\log n)^2\le m\le n/C$.
If $G$ is an $n$-vertex graph with $\delta(G)\ge (1/2+\varepsilon)n$, then there
exists a path $A\subset G$ with
\[
|V(A)|\le Cm
\]
such that for every set $U\subset V(G)\setminus V(A)$ with $|U|\le dm$, there
exists a path $A_U\subset G$ with the same endpoints as $A$ and
$V(A_U)=V(A)\cup U$.
\end{lemma}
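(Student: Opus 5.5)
The plan is the standard absorbing-path construction of R\"odl, Ruci\'nski and Szemer\'edi, adapted to graphs with $\delta(G)\ge(1/2+\varepsilon)n$.

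\textbf{Step 1: many absorbers for each vertex.} For a vertex $v$, call a path $xy$ (an edge) an \emph{absorber} for $v$ if $xv,vy\in E(G)$. Then $xy$ can be replaced by $xvy$, which keeps the endpoints and swallows $v$. Since $|N(v)|\ge(1/2+\varepsilon)n$, every $x\in N(v)$ has at least $2\varepsilon n$ neighbours inside $N(v)$. So each $v$ has at least $\varepsilon n^2$ absorbing edges.

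To make the later connections robust, I would work with short absorbing paths of bounded length instead (for example $4$-vertex paths $x_1x_2x_3x_4$ with $x_2v,vx_3\in E$), of which there are $\Omega(\varepsilon n^4)$ for each $v$.

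\textbf{Step 2: random selection.} Choose each $4$-tuple independently with probability $p=\gamma m/n^4$, for a small constant $\gamma=\gamma(\varepsilon)$. By Chernoff bounds and a union bound over the $n$ vertices, with positive probability:
\begin{itemize}
\item the family has $O(m)$ tuples;
\item it contains only $o(m)$ intersecting pairs of tuples and non-path tuples, which we delete;
\item every $v$ still has at least $\Omega(\varepsilon\gamma m)$ absorbers in the family.
\end{itemize}
The hypothesis $m\ge(\log n)^2$ is what makes these concentration bounds beat the union bound over all $n$ vertices.

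\textbf{Step 3: linking into a single path.} Connect the $O(m)$ disjoint absorbing paths into one path $A$, one junction at a time. Any two vertices $a,b$ have at least $2\varepsilon n$ common neighbours. Since only $O(m)\le n/C$ vertices are used so far, once $C$ is large a fresh common neighbour is always available as a connector. This gives $|V(A)|\le Cm$.

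\textbf{Step 4: absorbing a set $U$.} Given $U$ with $|U|\le dm$, assign the vertices of $U$ to distinct absorbers greedily. Each $v$ has $\Omega(\varepsilon\gamma m)$ absorbers and at most $dm$ of them are already taken, so the condition $d<2^{-10}\varepsilon$ (with $\gamma$ chosen compatibly) keeps the greedy assignment from getting stuck. Absorbing each vertex into its assigned path yields $A_U$ with the same endpoints as $A$ and $V(A_U)=V(A)\cup U$.

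\textbf{Main obstacle.} The delicate step is the probabilistic bookkeeping in Step 2: making sure that overlaps are few relative to the absorber counts when $m$ is as small as $(\log n)^2$. For this I would apply Markov's inequality to the overlap count and Chernoff bounds to the absorber counts.
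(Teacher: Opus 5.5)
The paper does not prove this lemma. It quotes it as Lemma~3.1 of Peng and Yan and uses it as a black box, so there is no in-paper argument to compare your route against. Your proposal is the standard R\"odl--Ruci\'nski--Szemer\'edi absorbing-path construction, and it does prove the statement. It makes the paper self-contained. It also shows that the restriction $d<2^{-10}\varepsilon$ is not essential for this argument: $C$ may grow like $d/\varepsilon^2$, so any $d>0$ works.

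Two quantitative claims in your write-up need correcting.

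\textbf{Intersecting pairs.} The expected number of intersecting pairs among the selected tuples is at most $16n^7p^2=16\gamma^2m^2/n$. This is not $o(m)$ when $m$ is linear in $n$. What saves you is $m\le n/C$, which bounds it by $16\gamma^2m/C$. That is negligible against the $\Omega(\varepsilon\gamma m)$ expected absorbers per vertex only because $C$ is taken large compared with $\gamma/\varepsilon$. This, together with the linking step (fewer than $2\varepsilon n$ vertices in use, i.e.\ $O(\gamma m)\le O(\gamma n/C)$), is exactly where the upper bound on $m$ is used.

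\textbf{Size of $\gamma$.} $\gamma$ cannot be an arbitrarily small constant. After Chernoff and the deletions, each vertex keeps $c_1\varepsilon\gamma m$ absorbers for some absolute $c_1$. The greedy step needs $c_1\varepsilon\gamma>d$. So fix $\gamma$ of order $d/\varepsilon$; under $d<2^{-10}\varepsilon$ an absolute constant suffices. Only then choose $C=C(\gamma,\varepsilon)$ large.

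\textbf{Minor points.} Sample only from the $4$-vertex paths of $G$. If you sample all $4$-tuples, the non-path ones are not $o(m)$ in number, although discarding them is harmless. Also, single edges $xy$ with $x,y\in N(v)$ would already serve as absorbers, since your linking step uses common neighbours anyway; the $4$-vertex paths are not needed.
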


Next we state the reservoir lemma, which will be used to connect the components
of a linear forest, which is Lemma~3.3 from~\cite{PY}.

\begin{lemma}[Reservoir for Hamilton cycles]\label{lem:reservoir}
For every $0<d,\varepsilon\leq 1/2$ and $C>1$, there exists $n_0=n_0(d,\varepsilon,C)$
such that the following holds for all $n\ge n_0$ and all integers
$(\log n)^2\le m\le \varepsilon n/(2d)$.
If $G$ is an $n$-vertex graph with $\delta(G)\ge (1/2+\varepsilon)n$ and
$W\subset V(G)$ satisfies $|W|\le Cm$, then there exists a set
$R\subset V(G)\setminus W$ with $|R|=dm$ such that
\[
|N(x)\cap N(y)\cap R|\ge \varepsilon|R|
\]
for every pair of vertices $x,y\in V(G)$.
\end{lemma}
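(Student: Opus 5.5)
The plan is a random choice of $R$ followed by a concentration estimate and a union bound over pairs. First fix a pair $x,y\in V(G)$ and bound their common neighbourhood. Since $\delta(G)\ge(1/2+\varepsilon)n$, inclusion--exclusion gives
\[
|N(x)\cap N(y)|\ \ge\ 2\delta(G)-n\ \ge\ 2\varepsilon n .
\]
(If $x=y$ the bound $|N(x)|\ge(1/2+\varepsilon)n$ is even stronger.) Deleting $W$ leaves
\[
S_{xy}:=(N(x)\cap N(y))\setminus W, \qquad |S_{xy}|\ge 2\varepsilon n-Cm .
\]

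Next choose $R$ uniformly at random among all $dm$-subsets of $V(G)\setminus W$, and write $N':=|V(G)\setminus W|\le n$. For each pair, $X_{xy}:=|S_{xy}\cap R|$ is hypergeometric with mean
\[
\mathbb E X_{xy}=dm\cdot\frac{|S_{xy}|}{N'}\ \ge\ dm\cdot\frac{2\varepsilon n-Cm}{n}.
\]
To leave room for concentration I need this to be at least $(1+\gamma)\varepsilon\, dm$ for some constant $\gamma>0$, for instance $\tfrac32\varepsilon\, dm$. That holds as long as $Cm\le\varepsilon n/2$.

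This is the one step where the hypotheses must be used carefully, and I expect it to be the main obstacle. The stated range $m\le\varepsilon n/(2d)$ does not by itself force $Cm\le\varepsilon n/2$. I would first check that in the intended application $m\le n/C'$ for a large constant $C'$, which does give $Cm\le \varepsilon n/2$ once $n$ is large. If needed, I would read the lemma with the range of $m$ restricted so that $|W|\le\varepsilon n/2$, which is harmless since this is how Lemma~\ref{lem:reservoir} is used after Lemma~\ref{lem:absorber}.

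With the expectation bound in hand, the Chernoff bound for the hypergeometric distribution gives
\[
\Pr\big[X_{xy}<\varepsilon dm\big]\ \le\ \Pr\big[X_{xy}<\tfrac23\,\mathbb E X_{xy}\big]\ \le\ \exp\big(-\mathbb E X_{xy}/18\big)\ \le\ \exp(-\varepsilon dm/12).
\]
Since $m\ge(\log n)^2$ and $d,\varepsilon$ are fixed constants, this is at most $\exp(-c(\log n)^2)=o(n^{-2})$ once $n\ge n_0(d,\varepsilon,C)$. A union bound over the at most $n^2$ pairs $x,y$ then shows that with positive probability every pair satisfies $|N(x)\cap N(y)\cap R|\ge\varepsilon|R|$. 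Fixing such an outcome gives the required $R$, with $|R|=dm$ and $R\subset V(G)\setminus W$ by construction.
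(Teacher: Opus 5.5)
Your argument is correct. The paper gives no proof of this lemma; it imports it as Lemma~3.3 of~\cite{PY}, so the only comparison is that your random-subset, hypergeometric-Chernoff, union-bound argument is the standard proof of such reservoir statements. The computations check out:
\begin{itemize}
\item $|N(x)\cap N(y)|\ge 2\varepsilon n$;
\item the mean satisfies $\mu\ge\tfrac32\varepsilon dm$ once $|W|\le\varepsilon n/2$;
\item $\Pr[X_{xy}<\varepsilon dm]\le\Pr[X_{xy}<\tfrac23\mu]\le e^{-\mu/18}\le e^{-\varepsilon dm/12}$, which is $n^{-\omega(1)}$ because $m\ge(\log n)^2$;
\item so the union bound over at most $n^2$ pairs succeeds.
\end{itemize}

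Your worry about the range of $m$ is not an artefact of your method: the lemma as printed is false. Take $\varepsilon<1/2$, $G=K_n$, $C\ge\max\{2,4d/\varepsilon\}$ and $m=\lfloor\varepsilon n/(2d)\rfloor$. This $m$ is admissible, since it is at least $\varepsilon n-1\ge(\log n)^2$. Then $Cm\ge 2n-C\ge n$, so $W=V(G)$ is permitted. But no $R\subset V(G)\setminus W$ with $|R|=dm>0$ exists.

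A natural correction is to require $m\le\varepsilon n/(2C)$; presumably the printed $2d$ should read $2C$. This gives $|W|\le Cm\le\varepsilon n/2$, which is exactly the condition your expectation bound uses. With it, your proof goes through verbatim.

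In the paper's only application, in the proof of Theorem~\ref{thm:main}, $W=V(A)$ has size $O(n^{1/2})$, far below $\varepsilon n/2$, so nothing downstream is affected.
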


We are now ready to prove Theorem~\ref{thm:main}.

\begin{proof}[Proof of Theorem~\ref{thm:main}]
Let $G$ be an $n$-vertex graph with $\delta(G)\ge cn$, equipped with a proper
edge-colouring $\chi$.
Set
\[
\varepsilon=(c-1/2)/2,\qquad
d=2^{-11}\varepsilon,\qquad
m=2^{12}\varepsilon^{-2}n^{1/2}.
\]

Applying Lemma~\ref{lem:absorber} to $G$, we obtain an absorbing path
$A\subset G$ with $|V(A)|\le Cm=Dn^{1/2}$
for some constant $D=D(\varepsilon)$, such that any vertex set
$U\subset V(G)\setminus V(A)$ with $|U|\le dm$ can be absorbed into $A$.

Next, we apply Lemma~\ref{lem:reservoir} with $W=V(A)$ to obtain a
reservoir set
\[
R\subset V(G)\setminus V(A)
\quad\text{with}\quad
|R|=dm=2\varepsilon^{-1}n^{1/2}
\]
such that
\begin{equation}\label{eq:common}
|N(x)\cap N(y)\cap R|\ge \varepsilon|R|\ge 2n^{1/2}
\end{equation}
for all $x,y\in V(G)$.

Let $G' = G\big[V(G)\setminus (V(A)\cup R)\big]$,
and let $\chi'$ be the restriction of $\chi$ to $E(G')$.
Note that $\chi'$ is a proper edge-colouring of $G'$ and 
$$\delta(G')\ge cn-|V(A)|-|R|
\ge cn-(D+2\varepsilon^{-1})n^{1/2}.$$ Applying Theorem~\ref{thm:pathforest} to $G'$, we obtain a spanning linear forest
$F\subset G'$ with at most $n^{1/2}$ components and at least $cn-(2D+4\varepsilon^{-1}+4)n^{1/2}$
distinct colours.

We now merge the components of $F$ and the path $A$ into a single cycle using
vertices from $R$.
Write $F=P_1\cup\cdots\cup P_T$, where $T\le n^{1/2}$ and each $P_i$ has endpoints
$\{x_i,y_i\}$.
Let $x_0,y_0$ be the endpoints of $A$, and set $x_{T+1}=x_0$.
By~\eqref{eq:common}, for each $0\le i\le T$ the vertices $y_i$ and $x_{i+1}$
have at least $2n^{1/2}$ common neighbours in $R$.
Since $T+1<2n^{1/2}$, we can greedily choose distinct vertices
\[
z_i\in N(y_i)\cap N(x_{i+1})\cap R
\quad (0\le i\le T).
\]

Adding the edges $\{y_iz_i,z_ix_{i+1}:0\le i\le T\}$ produces a cycle $H$ with
\[
|V(H)|\ge |V(F)|+|V(A)|\ge n-|R|.
\]
Importantly, this step only adds edges without deleting any edges from $F$, so $H$ retains at least as many distinct colours as $F$.

Finally, let $U=V(G)\setminus V(H)$ and we have $|U|\le |R|\le 2\varepsilon^{-1}n^{1/2}$.
By Lemma \ref{lem:absorber}, we obtain a path $A_U$ having the same endpoints with $A$ and
$V(A_U)=V(A)\cup U$.
Replacing $A$ by $A_U$ in $H$ yields a Hamilton cycle in $G$ with at least
$cn-O(n^{1/2})$ distinct colours.
\end{proof}

\medskip

\section{Colourful perfect matchings in Dirac bipartite graphs}\label{sec:perfectmatching}

In this section we prove Theorem~\ref{thm:bipartite}. Let $G$ be a balanced bipartite graph on $2n$ vertices with $\delta(G)\ge cn$ and equipped with a proper edge-colouring. We first construct a matching in $G$ with $n-o(n)$ edges that uses $cn-o(n)$ distinct colours (see Proposition~\ref{prop:matching}), and then extend this matching to a perfect one retaining most of its colours.

\subsection{Colourful near-perfect matchings}

In this subsection, our goal is to construct a near-perfect matching with a near-optimal number of colours, as follows.

\begin{proposition}\label{prop:matching}
For $1/2<c\le 1$, there exists $n_0\in\mathbb N$ such that for all $n\ge n_0$ the following holds. Let $t=t(n)\geq n^{2/3}$. If $G$ is a balanced bipartite graph on $2n$ vertices with $\delta(G)\ge cn$ and $\chi$ is a proper edge-colouring of $G$, then $G$ contains a matching $M$ with at least $n-t$ edges using at least $cn-8t$ distinct colours.
\end{proposition}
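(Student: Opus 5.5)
We mirror the argument for Proposition~\ref{prop:pathforest}, with the $t$ paths replaced by $t$ unmatched vertices on each side. Let $G$ have parts $X,Y$ with $|X|=|Y|=n$. Consider the family $\mathcal M$ of matchings with exactly $n-t$ edges. This family is nonempty, since by Hall's theorem (using $c>1/2$) $G$ has a perfect matching, and we may delete $t$ of its edges. Choose $M_0\in\mathcal M$ maximising the number of distinct colours, and suppose for contradiction that $M_0$ uses fewer than $cn-8t$ colours. Let $C_0$ be the set of colours not appearing on $M_0$. For a vertex $x\in Y$ covered by $M_0$, let $f(x)$ be the colour of its $M_0$-edge. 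Let $u_1,\dots,u_t\in X$ be the vertices of $X$ left unmatched.

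Following \eqref{eq:colourset}, we define
\[
C_i:=C_{i-1}\cup\{f(x): x\in N_{C_{i-1}}(u_i)\cap V(M_0)\}.
\]
The basic switch is as follows. If $x\in N_{C_0}(u_i)$ is matched to $x'$, we replace $xx'$ by $u_ix$. The result is still a matching of size $n-t$, and it gains a colour of $C_0$. This loses nothing precisely when $f(x)$ appears elsewhere on $M_0$. The chain argument of Lemma~\ref{lem:keyobs} then goes through verbatim. Given a chain $x=x_0,x_1,\dots,x_m$ with indices $i=s_0>s_1>\dots>s_m$, where $\chi(u_{s_\ell}x_\ell)=f(x_{\ell+1})$, we perform all swaps simultaneously. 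These swaps are compatible because the $u_{s_\ell}$ are distinct unmatched vertices and the $x_\ell$ are distinct by induction. The conclusion is that every $x\in N_{C_{i-1}}(u_i)\cap V(M_0)$ has $f(x)$ unique on $M_0$.

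The point where the bipartite case differs is that a neighbour of $u_i$ might be one of the $t$ unmatched vertices of $Y$. This only costs $t$ in the count, exactly like excluding the $v_1^{(j)}$ before. If $u_i$ is adjacent to an unmatched vertex of $Y$ via a $C_0$-colour, we can simply add that edge. This increases the matching size rather than the colour count. To handle it, we either work with maximal matchings of size at least $n-t$, or note that adding the edge and then deleting an edge with a repeated colour stays in $\mathcal M$. We will set this up so that both kinds of move strictly improve a lexicographic potential.

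Counting then follows \eqref{eq:neighbour-lower}. By properness, $|N_{C_{i}}(v)|\ge cn-|C(M_0)|+|C_i\setminus C_0|\ge 8t+|C_i\setminus C_0|$. Hence $|C_i\setminus C_0|\ge|C_{i-1}\setminus C_0|+7t$, so $|C_t\setminus C_0|\ge 7t^2\ge 7n^{4/3}>n$, which is a contradiction. Note that the bound $t\ge n^{2/3}$ is much more than this step needs, since $t\ge\sqrt n$ would suffice here. I expect the real use of $t\ge n^{2/3}$ to be in the later extension to a perfect matching.

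The main obstacle is verifying that the multi-step swaps remain valid matchings of the right size. In particular, the deleted edges must be distinct, and no $u_{s_\ell}$ may receive two edges. Distinctness of the $s_\ell$ gives the second property, and the uniqueness from the induction hypothesis gives the first.
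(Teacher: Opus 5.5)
Your argument is correct, but it is not the route the paper takes for this proposition.

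\textbf{Your route.} You transplant the backtracking of Lemma~\ref{lem:keyobs} and Proposition~\ref{prop:pathforest} to matchings. The $t$ unmatched vertices $u_1,\dots,u_t\in X$ serve as roots. Since every $M_0$-edge has exactly one endpoint in $Y$, the map sending $x\in Y\cap V(M_0)$ to its $M_0$-edge is a bijection onto $E(M_0)$, so $f$ plays the role of the predecessor colour. Once the $x_\ell$ are distinct, the simultaneous swaps along a chain give a matching of size $n-t$ with no further complications.

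\textbf{The paper's route.} It instead follows Gy\'arf\'as--S\'ark\"ozy. It grows nested rainbow sub-matchings $H_1\subset\dots\subset H_\ell\subset M$ with $e(H_i)=4ti$. It does this by double counting $C_{i-1}$-edges between all $2t$ unmatched vertices $R$ and $V(M)$, maintaining the one-side-empty property \eqref{eq:onesideempty}, and using the three-case trace-back Claim~\ref{claim:traceback}. This treats $R$ symmetrically and needs no ordering of roots. However, the global averaging is exactly where $t\ge n^{2/3}$ is consumed, through $8t^{1/2}n<|R|\cdot 4t$ and $cn/4t<\ell<t^{1/2}/2$. That happens inside this proposition. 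The extension lemma only needs $t<\varepsilon n/4$, so your guess about where the hypothesis is used is off.

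\textbf{What your route buys.} Your recurrence $|C_i\setminus C_0|>|C_{i-1}\setminus C_0|+7t$ only needs $7t^2\ge cn$. So your proof is shorter and quantitatively stronger. Combined with the paper's extension lemma, it would give $cn-O(\sqrt n)$ in Theorem~\ref{thm:bipartite}, matching the Hamilton cycle bound.

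\textbf{Small repairs.}
\begin{enumerate}
\item The induction hypothesis only yields $x_0\ne x_\ell$ for $\ell\ge 1$, not distinctness of $x_1,\dots,x_m$. Choose $s_{\ell+1}$ minimal with $\chi(u_{s_\ell}x_\ell)\in C_{s_{\ell+1}}$. Then the colours $f(x_\ell)$ lie in distinct layers $C_s\setminus C_{s-1}$, which forces the $x_\ell$ to be distinct.
\item You should state that uniqueness of $f(x_\ell)$ also gives $y\notin\{x_1,\dots,x_m\}$. This ensures the second copy of $f(x_0)$ survives the swaps.
\item Your paragraph about $C_0$-edges to unmatched vertices of $Y$ and a lexicographic potential can be dropped. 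Since $C_i$ only uses $N_{C_{i-1}}(u_i)\cap V(M_0)$, such vertices never enter a chain and only cost the $-t$ in the count.
\end{enumerate}
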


The proof proceeds by contradiction. Let $G$ be a balanced bipartite graph on $2n$ vertices with $\delta(G)\ge cn$ where $1/2<c\le 1$, equipped with a proper
edge-colouring $\chi$.
Fix an integer $t$, and let
\begin{equation*}
\mathcal M=\big\{M\subset G:\; M \text{ is a matching with $n-t$ edges}\big\}.
\end{equation*}
By Dirac's theorem, $G$ contains a perfect matching, and hence $\cM\neq\varnothing$ for all $t\leq n-1$. Choose a matching $M\in\cM$ that maximises the number of distinct colours appearing on
its edges. Suppose that $M$ uses fewer than $cn-8t$ distinct colours.

Denote $R:=V(G)\setminus V(M)$, and $|R|=2n-2e(M)=2t.$ 
Let $C(M)$ be the set of colours appearing in $M$ and denote the leftover colours by $C_0:=\{\chi(e):e\in E(G)\}\setminus C(M)$. Fix an arbitrary vertex $v\in R$, we have
\begin{align}\label{eq:N(v)inM}
N_{C_0}(v)\subset V(M),    
\end{align}
where $N_{C_0}(v)=\{u:uv\in E(G){\rm \ and}\ \chi(uv)\in C_0\}$. 
Indeed, otherwise there is an edge $vv'$ with $v,v'\in R$ whose colour lies in $C_0$; then we could add $vv'$ to $M$ and delete an edge of $M$ with a repeated colour (if necessary) to obtain a matching in $\cM$ using more colours, contradicting the maximality of $M$.
Let $C(N(v))$ be the set of colours appearing in the edges incident to $v$. As $N_{C_0}(v)\subset V(M),$ we have
\begin{align}
|N_{C_0}(v)|= |N(v)| - |N_{C(M)}(v)|\ge |N(v)| - |C(M)|\geq 
cn - (cn-8t)= 8t.  \label{eq:|N_{C_0}(v, V(M))|}
\end{align}

The key observation is that, using $C_0$ and \eqref{eq:|N_{C_0}(v, V(M))|}, we can iteratively construct a sequence of colour sets and, more importantly, a collection of rainbow matchings of growing size. After sufficiently many iterations, this forces $M$ to contain a rainbow sub-matching with more than $cn$ edges, contradicting the assumption $|C(M)|\le cn-8t$. To be precise, we present the following key lemma.

\begin{lemma}\label{lem:matchingkey}
Let $M \in \cM$ be the matching with maximal number of colours and let $C_0, R$ be defined as above. Suppose the number of distinct colours in $M$ is at most $cn - 8t$. Then for any integer $\ell$ satisfying $cn/4t < \ell < t^{1/2}/2$, there exist matchings
$H_1\subset H_2\subset...\subset H_\ell \subset M,$
together with a sequence of colour sets $C_1\subset C_2\subset...\subset C_\ell$ defined by
\[
C_i := C_0 \cup C(H_i), \quad \text{where}\ C(H_i)=\{\chi(e) : e \in E(H_i)\}, \qquad 1 \le i \le \ell,
\]
(set $H_0=\varnothing$ and $C_{-1}=\varnothing$) such that the following holds. 
\begin{enumerate}
\item[(i)] For each $0 \leq i\leq \ell$, $H_i$ is rainbow, that is,
$\chi(e)\neq\chi(e')$
for any distinct $e,e'\in H_i$.

\item[(ii)] For each $0 \le i \le \ell$, $e(H_i) = i\cdot 4t.$
 
\item[(iii)]  For each $0 \le i \le \ell$ and  every edge $xy\in H_i$, 
\begin{align}\label{eq:onesideempty}
\text{either}\qquad N_{C_{i-1}}(x,R)=\varnothing\qquad\text{or}\qquad N_{C_{i-1}}(y,R)=\varnothing.     
\end{align}
Moreover, 
\begin{align}\label{eq:degreesum}
|N_{C_{i-1}}(x, R)| + |N_{C_{i-1}}(y,  R)|\geq 8t^{1/2},
\end{align}
where $N_{C}(v, U)$ denote the set of $C$-neighbours of $v$ in $U$.

\item[(iv)] For every $0 \le i \le \ell$ and every vertex $v \in R$, 
\begin{align}\label{eq:|N_{C_i}(v, V(M_i))| ge s}
N_{C_i}(v) \subset V(M)\qquad\text{and}\qquad |N_{C_i}(v)| \ge (i+2)\cdot 4t. 
\end{align}
\end{enumerate}
\end{lemma}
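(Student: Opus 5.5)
We build $H_1\subset H_2\subset\cdots\subset H_\ell$ by induction on $i$, with (iv) for $i-1$ driving the step to $i$. The base case $i=0$ is \eqref{eq:N(v)inM} and \eqref{eq:|N_{C_0}(v, V(M))|}, since $C_0$-neighbours of $v\in R$ lie in $V(M)$ and number at least $8t=(0+2)\cdot 4t$.

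Suppose $H_{i-1}$ and $C_{i-1}$ satisfy (i)--(iv). Count $C_{i-1}$-edges between $R$ and $V(M)$. By (iv) for $i-1$ there are at least $2t\cdot (i+1)4t$ of them. An edge $xy\in M$ receives at most $2|R|=4t$ such edges, and would receive fewer if one side is empty. Call $xy\in M\setminus H_{i-1}$ \emph{good} if it satisfies \eqref{eq:degreesum} with respect to $C_{i-1}$.

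The first step is to show that a good edge has one side empty, i.e.\ \eqref{eq:onesideempty}. Suppose $x$ has a $C_{i-1}$-neighbour $r\in R$ and $y$ has a $C_{i-1}$-neighbour $r'\in R$ with $r\neq r'$; the degree sum lets us choose $r\neq r'$. Then we swap along an alternating structure. Delete $xy$ and add $rx$ and $yr'$, then delete one further edge to keep the size $n-t$, choosing it to have a repeated colour. If the added colours lie in $C_0$, we gain a colour. If a colour lies in $C(H_{i-1})$, it equals the colour of some $H_{i-1}$-edge $x'y'$, and we trace back through that edge's own $R$-neighbours, exactly as in the chain argument of Lemma~\ref{lem:keyobs}. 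Rainbowness (i) and the fact that $H_{i-1}$ was built in layers keep the chain acyclic, so finally a $C_0$ colour is gained, contradicting maximality of $M$. The $8t^{1/2}$ slack in \eqref{eq:degreesum} should ensure the $R$-vertices used along a chain of length at most $\ell<t^{1/2}/2$ can be chosen distinct.

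The second step is to show there are at least $4t$ good edges of $M\setminus H_{i-1}$ with pairwise distinct colours, not repeating colours of $H_{i-1}$. Since good edges have one side empty, the edge count gives
\[
\#\{\text{good}\}\cdot 2t + n\cdot 8t^{1/2}\ \ge\ 8(i+1)t^2 - (\text{contribution of }H_{i-1}),
\]
which yields roughly $4(i+1)t$ good edges, minus $|H_{i-1}|=4(i-1)t$ and an error of $4nt^{-1/2}\le 4t$, using $t\ge n^{2/3}$. That leaves at least $4t$ new good edges. Distinct colours come from a rigidity argument like Lemma~\ref{lem:keyobs}: if a good edge shares its colour with another $M$-edge, the swap that removes it loses nothing, so good edges have globally unique colours in $M$. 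Take $H_i$ to be $H_{i-1}$ plus $4t$ of these edges, which gives (i)--(iii).

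For (iv), $N_{C_i}(v)\subset V(M)$ by the same argument as \eqref{eq:N(v)inM}, using chains. The size bound follows because $v$ is incident to edges of all $4t$ newly added colours by properness: each new edge $xy$ has its nonempty side adjacent to $R$, but we need adjacency to $v$ specifically. Instead we argue $|N_{C_i}(v)|\ge cn-|C(M)\setminus C(H_i)|\ge cn-(cn-8t)+4it$, which gives $(i+2)4t$.

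The main obstacle is the chain-swapping argument in the first step: guaranteeing disjointness of the $R$-vertices and $M$-edges used along the chain, and the size bookkeeping. The constraint $\ell<t^{1/2}/2$ against the $8t^{1/2}$ degree slack is the quantity to exploit there. The lower bound $\ell>cn/4t$ is only needed afterwards, to reach the contradiction $e(H_\ell)>cn$.
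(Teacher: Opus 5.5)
Your plan follows the paper's proof closely. It has the same ingredients: induction on $i$, trace-back chains descending through the layers $H_m\setminus H_{m-1}$, a double swap that forces an empty side, and the count of $C_{i-1}$-edges between $R$ and $V(M)$ with threshold $8t^{1/2}$ and $t\ge n^{2/3}$. It also uses a swap argument for rainbowness and the properness count for (iv). Your version of the rainbowness step, that a good edge's colour is unique in all of $M$, gives (i) and disjointness from $C(H_{i-1})$ at once.

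The gap is in (iii). It must hold for \emph{every} edge of $H_i$, including the inherited edges of $H_{i-1}$, with respect to the enlarged set $C_{i-1}$. The degree-sum half is monotone in the colour set and carries over, but the one-side-empty half does not. Condition (iii) for $i-1$ only says that one endpoint has no $C_{i-2}$-neighbour in $R$, and that endpoint may acquire $R$-neighbours whose colours lie in $C(H_{i-1})\setminus C(H_{i-2})$. Your first step only treats good edges, which you defined inside $M\setminus H_{i-1}$, so ``which gives (i)--(iii)'' is unjustified for the old edges. You also use this property silently when you charge $H_{i-1}$ only $2t$ per edge in the count, given your own bound of $2|R|$ per edge.

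To close the gap, run the double swap on an old edge itself, as in the paper's Claim~\ref{claim:traceback}(3). Suppose $xy\in H_{i-1}$ has a $C_{i-1}$-neighbour $r$ of $x$ and a $C_{i-1}$-neighbour $r'$ of $y$ in $R$. Delete $xy$, add $xr$ and $yr'$ with $\chi(xr)\neq\chi(yr')$ (one side has at least $4t^{1/2}$ options), and trace back both. By properness neither new colour equals $\chi(xy)$, and later links can avoid it; since $H_{i-1}$ is rainbow, the chains never call for the deleted edge. You lose at most $\chi(xy)$ and gain two distinct $C_0$-colours, then delete a repeated-colour edge, contradicting maximality.

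Two smaller points. First, in the double swap the degree-sum slack must buy distinct \emph{colours}, both for the two added edges and for the two final $C_0$-edges. The condition $r\neq r'$ is automatic in a bipartite graph, whereas deleting $xy$ may cost you $\chi(xy)$. Second, for the count alone the issue is harmless. Since $|R\cap X|=|R\cap Y|=t$, every $M$-edge receives at most $|R|=2t$ edges from $R$ whatever (iii) says, so your bound $2|R|$ is off by a factor of two.
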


\begin{proof}
The proof is by induction on $i$. For the base case $i=0$, (i)--(iii) vacuously hold, and (iv) follows from~\eqref{eq:N(v)inM} and ~\eqref{eq:|N_{C_0}(v, V(M))|}. Now let $i\ge 1$ and assume that the conclusions of this lemma hold for all indices $j<i$.  We first prove the following trace-back argument which is the core of this proof.

\begin{claim}\label{claim:traceback}
For each $0\leq j<i$, the following holds.
\begin{enumerate}
    \item If there exists an edge $xy$ with $x, y \notin V(H_{j})$ and $\chi(xy) \in C_{j}$, then there exists a matching $H'$ with $V(H') \subseteq R \cup V(H_{j}) \cup \{x,y\}$, $xy\in E(H')$ and $e(H')=e(H_{j})+1$, containing $1$ more colour in $C_0$ than $H_{j}$;

    \item If there exist two vertex-disjoint edges $xy,zw$ with $x,y,z,w \notin V(H_{j})$, $\chi(xy)\neq\chi(zw)$ and $\chi(xy),\chi(zw)\in C_j$, then there exists a matching $H''$ with $V(H'') \subseteq R \cup V(H_{j}) \cup \{x,y,z,w\}$, $xy,zw\in E(H'')$ and $e(H'')=e(H_{j})+2$, containing $2$ more distinct colours in $C_0$ than $H_{j}$; 
    
    \item If there exists two vertex-disjoint edges $xy,zw$ with $x,z\notin V(H_{j})$ and $yw\in E(H_j)$, $\chi(xy)\neq\chi(zw)$ and $\chi(xy),\chi(zw)\in C_j$, then there exists a matching $H''$ with $V(H'') \subseteq R \cup V(H_{j}) \cup \{x,y,z,w\}$, $xy,zw\in E(H'')$ and $e(H'')=e(H_{j})+1$, containing $2$ more distinct colours in $C_0$, and losing at most $1$ colour in $C(H_{j})$.
    
\end{enumerate}
\end{claim}

\begin{poc}
We apply a trace-back method, based on the iterative conditions \eqref{eq:degreesum} and \eqref{eq:onesideempty}. Without loss of generality we just consider $j=i-1$.

For (1), assume $\chi(xy)\in C_{i-1}$. By the definition of $C_{i-1}$, if $\chi(xy)\in C_0$, then simply set $H'=\{xy\}\cup H_{i-1}$. Otherwise, there exists a minimal $m_1$ with $1\leq m_1\leq i-1$ and an edge $x_1y_1\in H_{m_1}\setminus H_{m_1-1}$ such that $\chi(x_1y_1)=\chi(xy)$. We then perform a trace-back process. First, add the edge $xy$. By \eqref{eq:degreesum} and \eqref{eq:onesideempty} for $m_1$, there exists at least $8t^{1/2}$ $C_{m_1-1}$-edges with one end $x_1$ or $y_1$ (say $x_1$) and another end in $R$. We can then pick $z_1\in R$ such that $\chi(x_1z_1)\in C_{m_2}\setminus C_{m_2-1}$ for some $0\leq m_2<m_1$.
Remove $x_1y_1$ and add $x_1z_1$. If $m_2=0$, then we are done; otherwise perform the same process on $x_1z_1$ as we did on $xy$. At each step we remove one matching edge and add a new disjoint edge say $x_sz_s$, maintaining the matching property. The point is that $\chi(x_sz_s)\in C_{m_{s+1}}$ and $m_s$ is decreasing in $s$.  After at most $i<t^{1/2}/2$ steps, we introduce a colour from $C_0$, producing a matching $H'$ as required.

For (2), we run a two-round trace-back process. Given $xy$ and $H_{i-1}$, by (1) we can get a matching $H'$ with the mentioned property and avoid the vertices $z,w$ and keep the edge of the same colour with $zw$ (since the process has at most $i<t^{1/2}/2$ steps and swap one edge each step, meanwhile $|R|=2t> t^{1/2}$ and we have at least $8t^{1/2}$ choices for each step). We then run the same process for $H'$ and $zw$ as in (i). Note we can run at most $i<t^{1/2}/2$ steps to finally introduce a colour from $C_0$ (each step we have enough choices to avoid the chosen edges with ends in $R$ and removed edges from the initial $H_i$), to get a matching $H''$ with required properties. Moreover, in the second round we choose the final $C_0$-edge with a colour different from the one obtained in the first round; this is possible since at each step forbidding one colour rules out at most one candidate neighbour (by properness of the colouring), while we always have at least $8t^{1/2}$ choices. 

For (3), we first remove $yw$ from $H_{i-1}$. Then we run a two-round trace-back process for $xy,wz$ and $H_{i-1}\setminus\{yw\}$ as in (2).
\end{poc}

We now construct the desired $H_i$ via some matching $H_i'\subseteq M$ such that $H_{i-1}\subseteq H_i\subseteq H_i'$ always holds. Let $H'_i\subseteq M$ be a maximal sub-matching such that every $xy\in H'_i$ satisfies \eqref{eq:degreesum} for $i$ (with respect to $C_{i-1}$).
Note $H_{i-1}\subset H'_i$, since each edge in $H_{i-1}$ satisfies \eqref{eq:degreesum} for $i-1$, and $C_{i-2}\subset C_{i-1}$.  

First we prove for every edge $xy\in H'_i$, either 
\begin{align}\label{eq:onesideforH'}
N_{C_{i-1}}(x,R)=\varnothing\qquad \text{or}\qquad N_{C_{i-1}}(y,R)=\varnothing.    
\end{align}
Suppose for contradiction that there exists an edge $xy\in H'_i$ such that both $N_{C_{i-1}}(x,R)$ and $N_{C_{i-1}}(y,R)$ are non-empty. As $H_i'$ satisfies \eqref{eq:degreesum} for $i$, we can find two edges $xw,yz$ such that $w,z\in R$, $\chi(xw)\neq\chi(yz)$, and $\chi(xw),\chi(yz)\in C_{i-1}$. If $xy\notin H_{i-1}$, then as $H_{i-1}\subset H_i'$ we must have $x,y\notin V(H_{i-1})$ and we can apply Claim~\ref{claim:traceback} (2) to $H_{i-1}$ and the two edges $xw$ and $yz$ to get a matching $H''$ with $V(H'') \subseteq R \cup V(H_{i-1}) \cup \{x,y,z,w\}$ and $e(H'')=e(H_{i-1})+2$, containing $2$ more distinct colours in $C_0$ than $H_{i-1}$. Set $M''=H''\cup M\setminus (H_{i-1}\cup \{xy\})$, and remove one edge in $M''$ with repeated colour. Observe then $e(M'')=e(M)$ and $|C(M'')|\geq|C(M)|+2-1= |C(M)|+1$, which conflicts with the colour maximality of $M$. If $xy\in H_{i-1}$, applying Claim~\ref{claim:traceback} (3) to $H_{i-1}$ and $xw$, $yz$ gives a conflict as well.

Now we show $e(H'_{i})\geq i\cdot4t$. To do this, we consider $E_{C_{i-1}}(R,V(M))$,  the $C_{i-1}$-edges between $M$ and $R$. On the one hand, for each $v\in R$ the number of $C_{i-1}$-neighbours of $v$ in $V(M)$ satisfies
$$|N_{C_{i-1}}(v, V(M))|=|N_{C_{i-1}}(v)|\geq (i+1)\cdot 4t,$$
where the first equality holds since $N_{C_{i-1}}(v)\subset V(M)$ by the first term of \eqref{eq:|N_{C_i}(v, V(M_i))| ge s} for $i-1$, and the second inequality holds by the second term of \eqref{eq:|N_{C_i}(v, V(M_i))| ge s} for $i-1$. Hence we have
\begin{align}\label{eq:E_{C_{i-1}}1}
|E_{C_{i-1}}(R,V(M))|\geq  |R|\cdot \min_{v\in R}|N_{C_{i-1}}(v, V(M))| \geq|R|\cdot (i+1)\cdot 4t.   
\end{align}
On the other hand, assume $e(H'_i)<i\cdot 4t$. We partition $M=H'_i\cup (M\setminus H'_i)$ to upper bound $|E_{C_{i-1}}(R,V(M))|$ in two parts. First for the $C_{i-1}$-edges between $R$ and $H'_i$,  
$$|E_{C_{i-1}}(R,V(H'_i))|\leq |R|\cdot e(H'_i)< |R|\cdot i\cdot 4t,$$
where the first inequality holds since either $N_{C_{i-1}}(x,R)=\varnothing$ or $N_{C_{i-1}}(y,R)=\varnothing$ for each $xy\in E(H'_i)$ by 
\eqref{eq:onesideforH'}; and for the $C_{i-1}$-edges between $R$ and $M\setminus H'_i$,
$$|E_{C_{i-1}}(R,V(M\setminus H'_i))|
\le 8t^{1/2}\cdot |M\setminus H'_i|
=8t^{1/2}\,((n-t)-e(H'_i))
\le 8t^{1/2}\,n.$$
since no edge in $M\setminus H'_i$ satisfies \eqref{eq:degreesum} by the maximality of $H'_i$. Hence combining the assumption $|R|= 2t$ and $t\geq n^{2/3}$, we have
\begin{align*}
|E_{C_{i-1}}(R,V(M))|&\leq |E_{C_{i-1}}(R,V(H'_i))|+ |E_{C_{i-1}}(R,V(M\setminus H'_i))|\\
&<|R|\cdot 4t\cdot i+8t^{1/2}n<|R|\cdot 4t\cdot (i+1),    
\end{align*}
which conflicts with \eqref{eq:E_{C_{i-1}}1}. 

Therefore $e(H'_i)\geq 4t\cdot i$, and we can select $H_{i-1}\subset H_i\subset H'_i$ to have exact $4t\cdot i$ edges. Note then (ii) holds for $i$, and meanwhile (iii) also holds for $i$ by \eqref{eq:onesideforH'} and $H_i\subset H'_i$. It remains to check (i) and (iv) for $i$.

For (i), we claim that each colour in $H_i$ appears exactly once. Otherwise suppose for some $xy\in E(H_i\setminus H_{i-1})$, there exists an edge $zw\in E(H_i)$ such that $\chi(xy)=\chi(zw)$. If $zw\in E(H_i\setminus H_{i-1})$, by \eqref{eq:degreesum} for $i$ we can find at least $4t^{1/2}$ edges $zq$ that $q\in R$ and $\chi(zq)\in C_{i-1}$, fix one such $zq$. Applying Claim~\ref{claim:traceback} (1) to $zq$ and $H_{i-1}$ gives a matching $H'$ with $V(H') \subseteq R \cup V(H_{i-1}) \cup \{z,q\}$ and $e(H')=e(H_{i-1})+1$, containing $1$ more colour in $C_0$ than $H_{i-1}$. Then let $M'=H'\cup M\setminus (H_{i-1}\cup\{zw\})$, we have
$e(M')=e(M)$ with one more colour than $M$, a contradiction. If $zw\in E(H_{i-1})$, applying Claim~\ref{claim:traceback} (1) to $xy$ and $H_{i-1}$ gives a conflict similarly as well.

Finally we check (iv) for $i$. Fix an arbitrary vertex $v \in R$. We first show $N_{C_i}(v) \subset V(M).$  Suppose to the contrary that there exists a $C_i$-edge $vz$ such that $z\in R$. If $\chi(vz)\in C_{i-1}$, applying Claim~\ref{claim:traceback} (1) to $vz$ and $H_{i-1}$, we can find a matching $H'$ with $V(H') \subseteq R \cup V(H_{i-1}) \cup \{v,z\}$ and $e(H')=e(H_{i-1})+1$, which contains 1 more colour in $C_0$ than $H_{i-1}$. Set $M'= H'\cup M\setminus H_{i-1}$ (and delete one edge of a repeated colour) to obtain a matching in $\cM$ with one more colour than $M$, a conflict. Otherwise $\chi(vz)\in C_i\setminus C_{i-1}$, which means there exists an edge $xy\in E(H_i\setminus H_{i-1})$ such that $\chi(vz)=\chi(xy)$. Then by (iii) for $i$ we can find one $C_{i-1}$-edge with one end $p\in R\setminus\{v,z\}$ and another end $x$ or $y$ (say $x$). Applying Claim~\ref{claim:traceback} (1) for $xp$ and $H_{i-1}$ gives a conflict similarly. It then remains to check the second inequality in \eqref{eq:|N_{C_i}(v, V(M_i))| ge s} for $i$.
Fix $v\in R$. Note that
$$|N_{C_i}(v)| =|N_{C_0\cup C(H_i)}(v)|
= |N(v)| - |N_{C(M)\setminus C(H_i)}(v)|
\geq |N(v)| - |C(M)\setminus C(H_i)|.$$
Since $H_i$ is a rainbow matching by (i) for $i$, we have $|C(M)\setminus C(H_i)|=|C(M)|-e(H_i)$ and so
$$|N_{C_i}(v)|\geq |N(v)|- \big(|C(M)|-e(H_i)\big)\geq cn - (cn-8t - i\cdot 4t)\geq (i+2)\cdot 4t,$$
where $e(H_i)=i\cdot 4t$ by (ii) for $i$. This completes the proof.
\end{proof}

Based on Lemma~\ref{lem:matchingkey}, we can now prove Proposition~\ref{prop:matching}.

\begin{proof}[Proof of Proposition~\ref{prop:matching}]
Assume $M$ is a matching in $\cM$ with the maximal number of distinct colours, which is at most $cn-8t$. Choose an integer $\ell$ satisfying $cn/4t<\ell<t^{1/2}/2$. By Lemma~\ref{lem:matchingkey}, there exist a rainbow matching $H_\ell\subset M$ such that 
\[
e(H_\ell)=\ell\cdot 4t>cn.
\]
Therefore $M$ contains more than $cn$ distinct colours, contradicting $|C(M)|\le cn-8t$.
\end{proof}

\subsection{Proof of Theorem~\ref{thm:bipartite}}
The following lemma completes the proof of Theorem~\ref{thm:bipartite}.
\begin{lemma}
Let $0<\varepsilon\leq1/2$, $n\in\mathbb{N}$ be sufficiently large and $t=t(n)<\varepsilon n/4$. Let $G$ be a bipartite graph with two parts $X,Y$ that $|X|=|Y|=n$, and $\delta(G)\ge (1/2+\varepsilon)n$, and let $M\subset G$ be a matching with $e(M)\geq n-t$. Then there exists a perfect matching $M'\subset G$ such that 
$$|E(M')\cap E(M)|\geq n-2t.$$
\end{lemma}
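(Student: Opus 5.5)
We extend $M$ to a perfect matching by repairing it one uncovered pair at a time, each time sacrificing at most one edge of $M$. Let $X_0\subset X$ and $Y_0\subset Y$ be the vertices not covered by $M$, so $|X_0|=|Y_0|=s:=n-e(M)\le t$. We build matchings $M=M_0,M_1,\dots,M_s$ with $e(M_k)=e(M)+k$ and $|E(M_k)\cap E(M)|\ge e(M)-k$. Then $M_s$ is perfect and
\[
|E(M_s)\cap E(M)|\ge e(M)-s\ge n-2t.
\]
Throughout we keep the invariant that every edge of $M_k$ not in $M$ is incident to a vertex of $X_0\cup Y_0$, and we track the set $D_k$ of original $M$-edges already destroyed, with $|D_k|\le k\le t$.

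For a single step, fix an unmatched pair $x\in X$, $y\in Y$ of the current matching $M_k$. If $xy\in E(G)$, we add it and lose nothing. Otherwise we look for an augmenting path of length $3$, namely $x\,b\,a\,y$ with $ab\in M_k$, $a\in X$, $b\in Y$. Such a path replaces $ab$ by $xb$ and $ay$, increasing the matching size by one and destroying only the single edge $ab$. We restrict to edges $ab\in M_k\cap M$ that have not already been used; at most $2t$ edges of $M_k$ are unsafe (non-original edges, or original edges touching earlier repairs). The vertex $x$ has at least $(1/2+\varepsilon)n$ neighbours in $Y$, and $y$ has at least $(1/2+\varepsilon)n$ neighbours in $X$. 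At most $2t$ of the neighbours of $x$ are unmatched in $Y$, and similarly for $y$.

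Consider the map sending each $b\in N(x)$ matched by a safe edge $ab$ to its partner $a$. This gives a set $A_x\subset X$ of partners with $|A_x|\ge (1/2+\varepsilon)n-4t$. Likewise $|N(y)\cap V(M_k)|\ge(1/2+\varepsilon)n-2t$. Both sets lie in $X$, which has size $n$, so by pigeonhole
\[
|A_x\cap N(y)|\ge 2\varepsilon n-6t>0
\]
as long as $t<\varepsilon n/4$ (with some slack, since $6t<1.5\varepsilon n<2\varepsilon n$). Any $a\in A_x\cap N(y)$ gives the required length-$3$ augmenting path, and we choose it among safe edges.

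The main point of care is the bookkeeping. Each repair destroys exactly one edge of $M$, and augmenting through $M_k$-edges that are themselves new could cascade losses, which is why we restrict to untouched edges of $M$. The count above shows enough such edges remain at every step, using $s\le t$ and the crude bound of $4t$ excluded vertices. After $s$ steps the matching is perfect and retains at least $e(M)-s\ge n-2t$ edges of $M$.
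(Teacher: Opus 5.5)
Your proof is correct and is essentially the paper's argument: for each uncovered pair $x\in X$, $y\in Y$, use the degree condition and pigeonhole in $X$ to find a matching edge $ab$ with $xb,ay\in E(G)$, and swap $ab$ for $xb,ay$, losing at most one edge of $M$ per uncovered pair; your explicit tracking of destroyed edges spells out what the paper leaves implicit in ``greedily''. (The restriction to untouched original edges is harmless but unnecessary, since augmenting through a newly added edge costs no edge of $M$, so the cascade you worry about cannot occur.)
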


\begin{proof}
For every pair $x\in X, y\in Y$ that $x,y\notin V(M)$, we claim there exists at least $2\varepsilon n-2t$ edges $ab\in M$ such that $ay,bx\in E(G)$.
Indeed, by the minimum degree condition, $x$ is adjacent to a set $Y'\subseteq V(M)\cap Y$ of size at least $(1/2+\varepsilon)n-t$. Let $X'\subseteq X$ be the set of vertices matched to $Y'$ by $M$, then $|N(y)\cap X'|\geq 2\varepsilon n-2t.$ Then all edges in $M$ incident to $N(y)\cap X'$ are as desired. 

Now for each unmatched vertices $x,y\not\in V(M)$, we can greedily swap an edge $ab$ as above with edges $ay,bx$ to complete $M$ to a perfect matching $M'$. As we swap at most $n-e(M)\le t$ edges in $M$ during this process, $|E(M')\cap E(M)|\geq n-2t,$
as required.
\end{proof}

Applying the above lemma to the colourful matching we obtained from Proposition~\ref{prop:matching} with $t=O(n^{2/3})$, we obtain a perfect matching $M'$ of $G$ with at least $cn-8t-2t=cn-O(n^{2/3})$.

\medskip

\section{Non-extendable linear forests and matchings}\label{sec:badconstruction}
In this section, we prove Propositions~\ref{Prop:construction} and \ref{Prop:construction2}.

\begin{proof}[Proof of Proposition~\ref{Prop:construction}]
Let $V(G)=A\cup B$ be a partition with $|A|=(1-c)n$ and $|B|=cn$.
Let $G[A]$ be an independent set, let $G[B]$ be a clique, and include all edges
between $A$ and $B$.
Note that
$\delta(G)=|B|=cn$.

Now let $F$ be any spanning linear forest in $G[B]$ (so $V(F)=B$), for example a
collection of vertex-disjoint paths covering $B$.
Consider any Hamilton cycle $H$ in $G$.
Since $A$ is independent, $H$ cannot contain an edge with both endpoints in $A$.
Thus every vertex of $A$ must be incident in $H$ to two edges going from $A$ to
$B$, and consequently $H$ uses exactly $2|A|=2(1-c)n$ edges between $A$ and $B$.
As $H$ has $n$ edges in total, it follows that $H$ contains at most
\[
n-2|A|=n-2(1-c)n=(2c-1)n
\]
edges with both endpoints in $B$.
In particular, since $E(F)\subseteq E(G[B])$, we have $|E(H)\cap E(F)|\le (2c-1)n$,
as required.
\end{proof}

\begin{proof}[Proof of Proposition \ref{Prop:construction2}]
Let $G$ be a balanced bipartite graph with parts $X$ and $Y$, where $|X|=|Y|=n$. Let $X_1 \subseteq X$ and $Y_1 \subseteq Y$ satisfy $|X_1|=|Y_1|=cn$, and set $X_2 := X \setminus X_1$ and $Y_2 := Y \setminus Y_1$. 
Define $G$ by including all edges between $X_1$ and $Y$, and all edges between $X_2$ and $Y_1$, while adding no edges between $X_2$ and $Y_2$. Notice then $\delta(G) \ge cn$.

Let $M$ be a perfect matching between $X_1$ and $Y_1$, so $e(M)=cn$. Now consider any perfect matching $M_1$ in $G$. 
Since there are no edges between $X_2$ and $Y_2$, in order for every vertex in $X_2$ to be covered by an edge of $M_1$, each vertex of $X_2$ must be matched to a distinct vertex in $Y_1$. Consequently, the edges of $M$ incident to these vertices of $Y_1$ cannot belong to $M_1$. Therefore, $M_1$ can contain at most
\[
cn-|X_2| = cn-(1-c)n = (2c-1)n
\]
edges of $M$, as required.
\end{proof}

\medskip

\section{Concluding remarks}\label{sec:conclusion}
In this paper, we prove that in any properly edge-coloured Dirac graphs, one can find an almost maximally colourful Hamilton cycle (perfect matching). We believe that the conclusions in~\Cref{thm:main,thm:bipartite} hold already at $c=1/2$. It would be interesting to settle this case.

Beyond the existence of a single colourful Hamilton cycle, it is natural to ask whether many such cycles exist. In the uncoloured setting, Christofides, K\"uhn and Osthus~\cite{CKO} showed that every $n$-vertex graph $\delta(G)\geq(1/2+\varepsilon)n$ contains at least $n/8$ edge-disjoint Hamilton cycles; see also~\cite{MPS2019,KKKO} for related results. Motivated by these results, we propose the following conjecture.

\begin{conjecture}
Let $1/2<c\le1$. If $G$ is a graph on $n$ vertices with $\delta(G)\ge cn$, then every proper edge-colouring of $G$ contains at least $n/8$ Hamilton cycles, each using at least $cn-o(n)$ distinct colours.
\end{conjecture}

Another natural direction is to study other colourful spanning structures.
In the complete graph setting, Montgomery, Pokrovskiy and Sudakov~\cite{MPS} showed that any tree on $n-o(n)$ vertices can be embedded in a properly edge-coloured $K_n$ with all edges receiving distinct colours.
For Dirac graphs, however, embedding arbitrary almost-spanning trees is impossible without further restrictions.
Koml\'os, S\'ark\"ozy and Szemer\'edi~\cite{KSS} proved that every $n$-vertex tree with maximum degree at most $cn/\log n$ embeds into any graph of minimum degree at least $(1/2+\varepsilon)n$.
This motivates the following colourful analogue.

\begin{conjecture}
Let $1/2<c\le1$. Suppose that $G$ is a graph on $n$ vertices with minimum degree
$\delta(G)\ge cn$. Then there exists a constant $d>0$ such that for every tree $T$ on $n$ vertices with $\Delta(T)\le dn/\log n$, any proper edge-colouring of $G$
contains a copy of $T$ using at least $cn-o(n)$ distinct colours.
\end{conjecture}

\medskip

\newcommand{\etalchar}[1]{$^{#1}$}

\end{document}